%%
%% Copyright 2007, 2008, 2009 Elsevier Ltd
%%
%% This file is part of the 'Elsarticle Bundle'.
%% ---------------------------------------------
%%
%% It may be distributed under the conditions of the LaTeX Project Public
%% License, either version 1.2 of this license or (at your option) any
%% later version.  The latest version of this license is in
%%    http://www.latex-project.org/lppl.txt
%% and version 1.2 or later is part of all distributions of LaTeX
%% version 1999/12/01 or later.
%%
%% The list of all files belonging to the 'Elsarticle Bundle' is
%% given in the file `manifest.txt'.
%%

%% Template article for Elsevier's document class `elsarticle'
%% with numbered style bibliographic references
%% SP 2008/03/01
%%
%%
%%
%% $Id: elsarticle-template-num.tex 4 2009-10-24 08:22:58Z rishi $
%%
%%
%\documentclass[final,3p,times]{elsarticle}
\documentclass[preprint,12pt]{elsarticle}
\usepackage{amsthm,amsfonts,amssymb,amscd,amsmath,enumerate,verbatim,calc,graphicx,geometry}
\usepackage[all]{xy}
\newtheorem{theorem}{Theorem}[section]
\newtheorem{lemma}[theorem]{Lemma}
\newtheorem{proposition}[theorem]{Proposition}
\newtheorem{corollary}[theorem]{Corollary}
\theoremstyle{definition}
\theoremstyle{definitions}
\newtheorem{definition}[theorem]{Definition}

\newtheorem{remark}[theorem]{Remark}
\newtheorem{example}[theorem]{Example}
\theoremstyle{notations}

\theoremstyle{remarks}

\journal{ }

\begin{document}

\begin{frontmatter}

%% Title, authors and addresses

%% use the tnoteref command within \title for footnotes;
%% use the tnotetext command for the associated footnote;
%% use the fnref command within \author or \address for footnotes;
%% use the fntext command for the associated footnote;
%% use the corref command within \author for corresponding author footnotes;
%% use the cortext command for the associated footnote;
%% use the ead command for the email address,
%% and the form \ead[url] for the home page:
%%
%% \title{Title\tnoteref{label1}}
%% \tnotetext[label1]{}
%% \author{Name\corref{cor1}\fnref{label2}}
%% \ead{email address}
%% \ead[url]{home page}
%% \fntext[label2]{}
%% \cortext[cor1]{}
%% \address{Address\fnref{label3}}
%% \fntext[label3]{}

\title{On the Capacity of Eilenberg-MacLane and Moore Spaces}

%% use optional labels to link authors explicitly to addresses:
%% \author[label1,label2]{<author name>}
%% \address[label1]{<address>}
%% \address[label2]{<address>}

\author[]{Mojtaba~Mohareri}
\ead{m.mohareri@stu.um.ac.ir}
\author[]{Behrooz~Mashayekhy\corref{cor1}}
\ead{bmashf@um.ac.ir}
\author[]{Hanieh~Mirebrahimi}
\ead{h$_{-}$mirebrahimi@um.ac.ir}
\address{Department of Pure Mathematics, Center of Excellence in Analysis on Algebraic Structures, Ferdowsi University of
Mashhad,\\
P.O.Box 1159-91775, Mashhad, Iran.}
\cortext[cor1]{Corresponding author}
\begin{abstract}
 K. Borsuk in 1979, in the Topological Conference in Moscow, introduced the concept of the capacity of a compactum and asked some questions concerning properties of the capacity of
compacta. In this paper, we give partial positive answers to three of  these questions in some cases. In fact, by describing spaces homotopy dominated by Moore and Eilenberg-MacLane spaces, we obtain the capacity of a Moore space $M(A,n)$ and an Eilenberg-MacLane space $K(G,n)$. Also, we compute the capacity of the wedge sum of finitely many Moore spaces of  different degrees and the capacity of the product of finitely many Eilenberg-MacLane spaces of  different homotopy types. In particular, we give exact capacity of the wedge sum of finitely many spheres of the same or different dimensions.
\end{abstract}

\begin{keyword} Homotopy domination\sep Shape domination\sep Homotopy type\sep Eilenberg-MacLane space\sep Moore space\sep Polyhedron \sep CW-complex \sep Compactum.
%% keywords here, in the form: keyword \sep keyword
\MSC[2010]{55P15, 55P55, 55P20,54E30, 55Q20.}
%% MSC codes here, in the form: \MSC code \sep code
%% or \MSC[2008] code \sep code (2000 is the default)

\end{keyword}

\end{frontmatter}

%%
%% Start line numbering here if you want
%%
% \linenumbers

%% main text
%\\\\\\\\\\\\\\\\\\\\\\\\\\\\\\\\\\\\\\\\\\\\\\\\\\\\\\\\\\\\\\\\\\\\\\\\\\\\\\\\\\\\\\\\\\\\\\\\\\\\\\\\\\\\\\\\\\\\\\\\\\\\\\\\\\\\\\\\\
%=========================================================================================================================================
%/////////////////////////////////////////////////////////////////////////////////////////////////////////////////////////////////////////
\section{Introduction and Motivation}
 K. Borsuk in \cite{So}, introduced the concept of capacity of a compactum (compact metric space) as follows:
 the capacity $C(A)$ of a compactum $A$ is the cardinality of the set of all shapes of compacta $X$ for which $\mathcal{S}
h(X) \leqslant \mathcal{S}h(A)$. Similarly, we can define the capacity for any topological space $A$ as the cardinality of the set of all shapes of spaces $X$ for which $\mathcal{S}
h(X) \leqslant \mathcal{S}h(A)$.

In the case polyhedra,  the notions shape and shape domination in the above definition can be replaced by the notions homotopy type and homotopy domination, respectively. Indeed, by some known
results in shape theory we conclude that for any polyhedron $P$, there is a 1-1 functorial correspondence between the shapes of compacta shape dominated by $P$ and the homotopy types of CW-complexes (not necessarily finite) homotopy dominated by $P$ (in both pointed and unpointed cases) \cite{18}.

 It is obvious that the capacity of a topological space is a homotopy invariant, i.e., if topological spaces $X$ and $Y$ have the same homotopy type, then $C(X)=C(Y)$. Now, it is interesting to know that what topological spaces have finite capacity. Of course, S. Marther in \cite{17} proved that every polyhedron dominates only a countable number of different homotopy types (hence shapes).

In addition, Borsuk in \cite{So} asked a question: `` Is it true that the capacity of every finite polyhedron is finite? ''.  D. Kolodziejczyk in \cite{16} gave a negative answer to this question. However, she investigated some conditions for polyhedra to have finite capacity (\cite{13, 18, 14, 15}). For instance, polyhedra with finite fundamental groups and polyhdera $P$ with abelian fundamental groups $\pi_1 (P)$ and finitely generated homology groups $H_i (\tilde{P})$, for $i\geq 2$, have finite capacity.

Also, Kolodziejczyk has studied on the capacity of CW-complexes to be finite or infinite. Note that she only works on finite CW-complexes, but in this paper, we study on some finite or infinite CW-complexes. Moreover, we compute the exact capacity of such spaces. Also, we concentrate on some questions of Borsuk which are stated in \cite{So} as follows:

Borsuk in \cite{So} stated some questions concerning with properties of the capacity of
compacta. In this paper, we give partial answer to three of these questions in some cases. The first question is the following one:
\begin{center}
 1. Is $C( X \times Y)$ determined by $C(X)$ and $C(Y)$?
\end{center}
In Section 4, we give a partial positive answer to this question as follows: If $X$ and $Y$ are Eilenberg-MacLane CW-complexes $K(G,n)$ and $K(H,m)$, respectively, such that $n\neq m$ and $G$ and $H$ are Hopfian groups, then $C(X\times Y)=C(X)\times C(Y)$  (see Proposition \ref{Q1}).

The second question is:
\begin{center}
2. Is $C(X \cup Y)$ determined by $C(X)$, $C(Y)$ and $C(X \cap Y)$?
\end{center}
Kolodziejczyk in \cite{16} gave  a negative answer to this question. She proved that there exist two finite CW-complexes $X$ and $Y$ with
$\dim X = \dim Y = 2$ such that $C(X), C(Y)$ and $C(X \cap Y)$ are finite, while
$C(X \cup Y)$ is infinite. In Section 3, we show that for Moore spaces $X=M(A,n)$ and $Y=M(B,m)$, $C(X\vee Y)=C(X)\times C(Y)$ provided that  $n\neq m$, $n,m \geq 2$, $A$ and $B$ are abelian Hopfian groups and $(X,x_0 )$ and $(Y,y_0)$ are good (see Proposition \ref{10001}). Recall that, a Moore space  is a simply connected $CW$-complex $X$ with a single non-vanishing homology group for some $n\geq 2$, that is $\tilde{H}_{i}(X,\mathbb{Z})=0$ for $i\neq n$.

The next question is as follows:
\begin{center}
3.  Is the capacity $C(A)$ determined by the homology properties of $A$?
\end{center}
In Section 3, we show that the answer to the above question is positive for Moore spaces. In fact, we prove that there is a one-to-one corresponding between homotopy classes of spaces homotopy dominated by $M(A,n)$ and direct summands of $A$ up to isomorphism, for $n\geq 2$ (see Proposition \ref{100}).

Borsuk in \cite{So} asked another question on the capacity of finite polyhdera as follows:
\begin{center}
 Is it true that the capacity of every finite polyhedron is finite?
\end{center}
Kolodziejczyk in \cite{16} showed that there exists a polyhedron (even of dimension 2) homotopy dominates infinitely many polyhedra of different homotopy types, and so she gave a negative answer to this question.
Moreover, she proved that such examples are not rare, for every non-abelian poly-$\mathbb{Z}$-group
$G$ and an integer $n\geq 3$ there exists a polyhedron $P$ with $\pi_1 (P)\cong G$ and $\dim P = n$
dominating infinitely many polyhedra of different homotopy types (see \cite{13}). In particular, there
exist polyhedra with nilpotent fundamental groups and infinite capacity.
Also, she gave positive answer to these questions under some conditions: in \cite{15} she proved (using the results of localization theory in the homotopy category of CW-complexes) that every simply connected polyhedron dominates only finitely many different homotopy types. In \cite{14} she also proved  that
polyhedra with finite fundamental groups dominate only finitely many different homotopy
types. In  \cite{18}, by extending the methods of \cite{14}, she proved that for some classes of polyhedra
with abelian fundamental groups, the answer to the above question is positive.
She also proved that every nilpotent polyhedron dominates only finitely many different
homotopy types.\\

In this paper, we compute the capacity of some well-known topological spaces exactly. We compute the exact capacity of Moore spaces $M(A,n)$ and Eilenberg-MacLane spaces $K(G,n)$ (in finite or infinite cases). In fact, we show that the capacity of a Moore space $M(A,n)$ and an Eilenberg-MacLane space $K(G,n)$ equals to the number of direct summands of $A$ and $G$, respectively, up to isomorphism. Also, we compute the capacity of the wedge sum of finitely many Moore spaces of different degrees and the capacity of the product of finitely many Eilenberg-MacLane spaces of different homotopy types. In particular, we compute the capacity of the wedge sum of finitely many spheres of the same or different dimensions. Note that Borsuk in \cite{So} has mentioned that $C(\mathbb{S}^n)=2$ and $C(\bigvee_k \mathbb{S}^1 )=k+1$.

W. Holsztynski in \cite{12} proved that the number of homotopy idempotents of a CW-complex is an  upper bound for the capacity of it. Finally,  we show that this upper bound is not so good (see Remark \ref{899}).

\section{Preliminaries}
In this paper every topological space is assumed to be connected. We expect that the reader is familiar with the basic notions and facts of shape theory (see \cite{B1} and \cite{Mar}) and retract theory \cite{ret}. We need the following results and definitions for the rest of the paper.
\begin{theorem}\label{3030}\cite{3}.
If a map $f :X\longrightarrow Y$ between connected CW complexes induces isomorphisms
$f_* :\pi_n (X)\longrightarrow \pi_n (Y)$  for all n, then f is a homotopy equivalence.
\end{theorem}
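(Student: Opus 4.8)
The statement is the classical Whitehead theorem, so the plan is to follow the standard mapping-cylinder argument. First I would replace $f$ by an inclusion. By cellular approximation I may assume $f$ is cellular, so that the mapping cylinder $M_f$ is a CW complex containing $X$ as a subcomplex, and the canonical retraction $r:M_f\lo Y$ is a homotopy equivalence with $f=r\circ i$, where $i:X\lo M_f$ is the inclusion. Since $r$ is a homotopy equivalence and $f_*$ is an isomorphism on every $\pi_n$, it follows that $i_*:\pi_n(X)\lo\pi_n(M_f)$ is an isomorphism for all $n$ and every choice of basepoint.

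Next I would feed this into the long exact sequence of the pair $(M_f,X)$,
\begin{equation*}
\cdots\lo\pi_n(X)\st{i_*}{\lo}\pi_n(M_f)\lo\pi_n(M_f,X)\lo\pi_{n-1}(X)\lo\cdots.
\end{equation*}
Because $i_*$ is an isomorphism in every degree, exactness forces $\pi_n(M_f,X)=0$ for all $n$.

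The heart of the argument is then the compression lemma: if $(K,L)$ is a CW pair and $(Z,W)$ a pair of spaces with $\pi_n(Z,W)=0$ for every $n$ for which $K\setminus L$ has an $n$-cell, then any map $(K,L)\lo(Z,W)$ is homotopic rel $L$ to a map into $W$. Applying this to the identity map of $(M_f,X)$, using the vanishing of $\pi_n(M_f,X)$ just established, produces a homotopy rel $X$ from $\mathrm{id}_{M_f}$ to a map $M_f\lo X$, i.e. a strong deformation retraction of $M_f$ onto $X$. Hence $i:X\lo M_f$ is a homotopy equivalence, and therefore so is $f=r\circ i$.

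The main obstacle I anticipate is the careful bookkeeping of basepoints in the compression step: the relative groups $\pi_n(M_f,X)$ must vanish for every basepoint in $X$, and the induction proceeds one cell at a time, compressing the map across each cell using the triviality of the relevant relative homotopy group. Establishing the compression lemma itself, essentially an obstruction-theoretic argument extending a homotopy over a single cell at each stage, is where all the real work lies; the reduction via the mapping cylinder and the manipulation of the exact sequence are purely formal.
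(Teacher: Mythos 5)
Your proposal is the standard mapping-cylinder plus compression-lemma proof of Whitehead's theorem, which is precisely the argument in the cited source (Hatcher); the paper itself offers no proof, merely the citation. The argument is correct and complete in outline, so nothing further is needed.
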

\begin{theorem}\label{3031}\cite{3}.
A map $f :X\longrightarrow Y$ between simply-connected CW complexes is a homotopy
equivalence if $f_* :H_n (X) \longrightarrow H_n (Y) $  is an isomorphism for each n.
\end{theorem}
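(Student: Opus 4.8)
The plan is to deduce this homology version of Whitehead's theorem from the homotopy version already recorded in Theorem \ref{3030}, using the relative Hurewicz theorem as the bridge between homology and homotopy. The first step is the standard reduction: replace $f:X\lo Y$ by the inclusion $X\hookrightarrow M_f$ of $X$ into the mapping cylinder $M_f$, which is homotopy equivalent to $Y$ via the canonical deformation retraction. Since this retraction is a homotopy equivalence, it induces isomorphisms on all homology groups, so under the hypothesis the inclusion $X\hookrightarrow M_f$ also induces isomorphisms $H_n(X)\cong H_n(M_f)$ for every $n$. Thus it suffices to show that this inclusion is a homotopy equivalence.

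Next I would examine the long exact homology sequence of the pair $(M_f,X)$. Because the inclusion induces isomorphisms on homology in every degree, exactness forces $H_n(M_f,X)=0$ for all $n$. Both $X$ and $Y$ are simply connected, hence so is $M_f$, and from the long exact homotopy sequence of the pair one checks that $\pi_1(X)\lo\pi_1(M_f)$ is an isomorphism and $\pi_1(M_f,X)=0$, so the pair $(M_f,X)$ is $1$-connected with $X$ simply connected. This is exactly the hypothesis needed to invoke the relative Hurewicz theorem.

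The heart of the argument is an induction showing $\pi_n(M_f,X)=0$ for all $n$. Suppose the pair is $(n-1)$-connected for some $n\geq 2$; since $X$ is simply connected, the relative Hurewicz theorem gives an isomorphism $\pi_n(M_f,X)\cong H_n(M_f,X)=0$, so the pair is in fact $n$-connected. Taking the $1$-connectivity established above as the base case, induction yields $\pi_n(M_f,X)=0$ for every $n$. Feeding this into the long exact homotopy sequence of the pair shows that $X\hookrightarrow M_f$ induces isomorphisms on all homotopy groups, so Theorem \ref{3030} applies and the inclusion is a homotopy equivalence; composing with the equivalence $M_f\simeq Y$ gives that $f$ itself is a homotopy equivalence.

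The main obstacle is the correct application of the relative Hurewicz theorem: one must verify that the pair $(M_f,X)$ is simply connected so that the comparison between relative homotopy and relative homology is valid, and the vanishing of all $\pi_n(M_f,X)$ must be organized as an induction on connectivity rather than proved in one stroke, since Hurewicz only identifies the first nonvanishing relative homotopy group with the first nonvanishing relative homology group.
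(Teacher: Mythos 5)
The paper offers no proof of this statement, citing it directly from Hatcher's \emph{Algebraic Topology}; your argument (mapping cylinder reduction, vanishing of $H_n(M_f,X)$ via the long exact sequence, inductive application of the relative Hurewicz theorem, then Theorem \ref{3030}) is precisely the proof given in that reference and is correct. No issues.
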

\begin{theorem}\label{-1}\cite{4}.
1) A connected CW-space X is contractible if and only if all its homotopy groups $\pi_n (X)$ ($n\geq 1$) are trivial.

2) A simply connected CW-space X is contractible if and only if all its homology groups $H_n (X)$ ($n\geq 2$) are trivial.
\end{theorem}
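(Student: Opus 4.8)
\section{Proof Proposal}

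The plan is to prove both biconditionals by comparing $X$ with a one-point space $\ast$ (which is itself a contractible CW-complex) and invoking the two Whitehead-type theorems already recorded as Theorems \ref{3030} and \ref{3031}. In each part the ``only if'' direction is immediate: if $X$ is contractible then $X\simeq\ast$, and since homotopy and reduced homology groups are homotopy invariants, all $\pi_n(X)$ and all $\tilde{H}_n(X)$ vanish. So the content lies entirely in the ``if'' directions, and the uniform idea is that the constant map $c:X\lo\ast$ automatically induces isomorphisms on the relevant invariants once those invariants are known to vanish, whence $c$ is a homotopy equivalence.

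For part (1), I would assume $\pi_n(X)=0$ for all $n\ge1$. Then for every $n$ the induced homomorphism $c_*:\pi_n(X)\lo\pi_n(\ast)$ is a map between trivial groups, hence an isomorphism. Since $X$ and $\ast$ are connected CW-complexes, Theorem \ref{3030} applies and shows that $c$ is a homotopy equivalence; therefore $X\simeq\ast$ is contractible.

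For part (2), I would assume $X$ is simply connected and $H_n(X)=0$ for all $n\ge2$. The key preliminary observation is that these hypotheses force all reduced homology to vanish: since $X$ is path-connected we have $H_0(X)\cong\Z\cong H_0(\ast)$, and since $\pi_1(X)=0$ its abelianization $H_1(X)$ is also trivial (by the Hurewicz theorem in degree one, or simply because $H_1$ is the abelianization of $\pi_1$). Combined with the assumption in degrees $\ge2$, this makes $c_*:H_n(X)\lo H_n(\ast)$ an isomorphism for every $n$. As both $X$ and $\ast$ are simply connected CW-complexes, Theorem \ref{3031} then yields that $c$ is a homotopy equivalence, so $X$ is contractible.

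I expect no serious obstacle here, since the two Whitehead theorems do all the heavy lifting; the only point requiring care is the degree-one check in part (2), where simple connectivity must genuinely be used to kill $H_1(X)$ before Theorem \ref{3031} can be applied. An alternative route for part (2) would be to use the Hurewicz theorem inductively to conclude $\pi_n(X)=0$ for all $n$ and then quote part (1), but routing directly through Theorem \ref{3031} is cleaner and avoids re-deriving Hurewicz.
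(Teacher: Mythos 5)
Your argument is correct. Note, however, that the paper itself offers no proof of this statement: it is quoted as a known result from Baues's book (reference \cite{4}), so there is nothing internal to compare against. Your derivation is the standard one and is exactly what one would write to make the statement self-contained from the paper's own preliminaries: both ``only if'' directions follow from homotopy invariance, and both ``if'' directions follow by applying the two Whitehead-type theorems (Theorems \ref{3030} and \ref{3031}) to the constant map $c:X\lo\ast$, since a homomorphism between trivial groups is automatically an isomorphism. The one point that genuinely requires care --- checking that simple connectivity kills $H_1(X)$ and path-connectedness gives $H_0(X)\cong\Z$ before Theorem \ref{3031} can be invoked in all degrees --- is handled correctly. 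No gaps.
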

\begin{definition}\cite{4}.
Let $\lambda : \mathcal{C} \longrightarrow \mathcal{D}$ be a functor. By the sufficiency and the
realizability conditions, with respect to $\lambda$, we mean the following:
\begin{enumerate}[(a)]
\item
\textbf{Sufficiency}: if $\lambda (f)$ is an isomorphism, then so is $f$, where $f$ is a morphism in
$\mathcal{C}$. That is, the functor $\lambda$ reflects isomorphisms.
\item
\textbf{Realizability}: two following conditions  satisfy:
\begin{itemize}
\item
The functor $\lambda$ is representative, that is, for each object $D$ in $\mathcal{D}$ there is an
object $C$ in $\mathcal{C}$ such that $\lambda (C)$ is isomorphic to $D$. In this case, we say that
$D$ is $\lambda$-realizable.
\item
The functor $\lambda$ is full, that is, for objects X, Y in $\mathcal{C}$ and for each
morphism $f: \lambda (X)\longrightarrow \lambda (Y)$ in $\mathcal{D}$ there is a morphism $f_0 : X \longrightarrow Y$ in $\mathcal{C}$ with $\lambda (f_0 ) =f$. In this case, we also say that $f$ is $\lambda$-realizable.
\end{itemize}
\end{enumerate}
\end{definition}
\begin{definition}\cite{4}.
We call $\lambda : \mathcal{C}\longrightarrow \mathcal{D}$  a detecting functor if $\lambda$ satisfies both the
sufficiency and the realizability conditions, or equivalently if $\lambda$ reflects isomorphisms,
is representative and full.
\end{definition}
A faithful detecting functor is called an equivalence of categories. By a faithful functor, we mean a functor $\lambda : \mathcal{C}\longrightarrow \mathcal{D}$ such that the induced maps $\lambda :Hom (X,Y)\longrightarrow Hom(\lambda X,\lambda Y)$ are injective, for all objects $X,Y \in \mathcal{C}$ (see \cite{4}).
\begin{lemma}\label{1-1}\cite{4}.
A detecting functor $\lambda : \mathcal{C} \longrightarrow \mathcal{D}$ induces a 1-1 correspondence
between equivalence classes of objects in $\mathcal{C}$ and equivalence classes of objects
in $\mathcal{D}$.
\end{lemma}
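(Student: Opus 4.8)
The plan is to write down the obvious candidate for the correspondence and then verify that it is a well-defined bijection by peeling off the three defining properties of a detecting functor one at a time. Here "equivalence classes of objects" should be read as isomorphism classes, and I will write $[C]$ for the isomorphism class of an object $C$. I would define the map on isomorphism classes by $\Phi([C]) = [\lambda(C)]$, going from the isomorphism classes of $\mathcal{C}$ to those of $\mathcal{D}$, and argue that it is the desired correspondence.

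First I would check that $\Phi$ is well defined. Since $\lambda$ is a functor it carries isomorphisms to isomorphisms, so $C \cong C'$ in $\mathcal{C}$ forces $\lambda(C) \cong \lambda(C')$ in $\mathcal{D}$; hence $[\lambda(C)]$ depends only on the class $[C]$ and not on the chosen representative.

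Next, surjectivity of $\Phi$ is exactly the representative half of the realizability condition: given any object $D$ of $\mathcal{D}$ there is an object $C$ of $\mathcal{C}$ with $\lambda(C) \cong D$, so $[D] = [\lambda(C)] = \Phi([C])$ lies in the image.

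The key step, and the only place where the remaining hypotheses combine, is injectivity. I would suppose $\Phi([C]) = \Phi([C'])$, so that there is an isomorphism $g : \lambda(C) \lo \lambda(C')$ in $\mathcal{D}$. By fullness there is a morphism $f_0 : C \lo C'$ in $\mathcal{C}$ with $\lambda(f_0) = g$. Because $g$ is an isomorphism, the sufficiency condition (that $\lambda$ reflects isomorphisms) guarantees that $f_0$ is itself an isomorphism, whence $C \cong C'$ and $[C] = [C']$. Combined with well-definedness and surjectivity, this shows $\Phi$ is a bijection. I do not expect a genuine obstacle: the argument is purely formal, and the only point requiring care is to invoke fullness to lift $g$ to $\mathcal{C}$ first and only then apply sufficiency, rather than attempting these in the reverse order.
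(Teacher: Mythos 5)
Your proof is correct, and the order of operations you emphasize (lift the isomorphism via fullness first, then apply sufficiency to conclude the lift is an isomorphism) is exactly the right one. The paper itself gives no proof of this lemma --- it is quoted from Baues \cite{4} --- so there is nothing to compare against; your argument is the standard one and uses each of the three defining properties of a detecting functor precisely where it is needed.
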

\begin{definition}\cite{18}.
A homomorphism $f :G \longrightarrow H$ of groups is an $r$-homomorphism if there
exists a homomorphism $g :H\longrightarrow G$ such that $f\circ g = id_H$. Then $H$ is an $r$-image of $G$.
\end{definition}
In particular, let $G$ be a group with a subgroup $H$. Then $H$ is called a retract of $G$ if there exists a homomorphism $r:G\longrightarrow H$ such that $r\circ i =id_H$ where $i:H\longrightarrow G$ is the inclusion homomorphism.
\begin{lemma}\label{409}
Every $r$-image of an arbitrary group $G$ is a semidirect factor of it and vice versa.
\end{lemma}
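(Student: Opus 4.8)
The plan is to prove both implications directly, by exhibiting the relevant subgroups and splitting maps; the argument is purely group-theoretic, so the definitions of $r$-homomorphism and retract given just above are the only inputs needed.

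For the forward direction, suppose $H$ is an $r$-image of $G$, so there are homomorphisms $f:G\longrightarrow H$ and $g:H\longrightarrow G$ with $f\circ g=\mathrm{id}_H$. First I would observe that $g$ is a monomorphism (if $g(h_1)=g(h_2)$ then $h_1=f(g(h_1))=f(g(h_2))=h_2$) and $f$ is an epimorphism, so that $H':=g(H)$ is a subgroup of $G$ isomorphic to $H$. Then I would set $N:=\ker f$, which is automatically normal in $G$, and claim that $G=N\rtimes H'$. To establish this I must check the two conditions $N\cap H'=\{e\}$ and $NH'=G$. The first holds because an element $g(h)\in H'$ lying in $\ker f$ satisfies $h=f(g(h))=e$, whence $g(h)=e$; the second follows by writing an arbitrary $x\in G$ as $x=\big(x\,g(f(x))^{-1}\big)\,g(f(x))$ and noting that the first factor lies in $N$ since $f\big(x\,g(f(x))^{-1}\big)=f(x)\,f(x)^{-1}=e$. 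This realizes $H\cong H'$ as a semidirect factor of $G$.

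For the converse, suppose $H$ is a semidirect factor of $G$, i.e.\ $G=N\rtimes H'$ with $N\trianglelefteq G$, $H'\cong H$, $N\cap H'=\{e\}$ and $NH'=G$. Then every element of $G$ has a unique expression $n h'$ with $n\in N$ and $h'\in H'$, and I would define $f:G\longrightarrow H'$ to be the retraction sending $nh'$ to $h'$; equivalently $f$ is the composite $G\longrightarrow G/N\xrightarrow{\ \sim\ }H'$, which makes it transparent that $f$ is a homomorphism. Taking $g:H'\hookrightarrow G$ to be the inclusion gives $f\circ g=\mathrm{id}_{H'}$, so $H'\cong H$ is an $r$-image of $G$.

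The only real obstacle is bookkeeping rather than ideas: I must make sure that the decomposition $x=\big(x\,g(f(x))^{-1}\big)\,g(f(x))$ genuinely splits $G$ as $N\rtimes H'$, and that the projection $f$ in the converse is a group homomorphism rather than merely a set map. Both points reduce to the standard internal characterization of a semidirect product by a normal subgroup together with a complementary retract, so once the splitting maps are written down no further machinery is required.
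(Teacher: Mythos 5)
Your proof is correct and follows the only natural route, which is exactly what the paper intends: its own ``proof'' is the single sentence ``It can be concluded by the definition of semidirect product,'' and your argument (taking $N=\ker f$ and $H'=g(H)$ in one direction, and the projection $G=N\rtimes H'\to H'$ split by the inclusion in the other) is precisely the standard unwinding of that remark. You also correctly read ``semidirect factor'' as the complement rather than the normal part, which is the interpretation needed for the equivalence to hold.
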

\begin{proof}
It can be concluded by the definition of semidirect product.
\end{proof}
\begin{corollary}\label{408}
Let $G$ be an abelian group. Then cardinality of the following three sets are equal.
\begin{enumerate}
\item
The set of r-images of $G$, up to isomorphism.
\item
The set of retracts of $G$, up to isomorphism.
\item
The set of direct summands of $G$, up to isomorphism.
\end{enumerate}
\end{corollary}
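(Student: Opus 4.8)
The plan is to show that the three collections in the statement actually coincide as sets of isomorphism classes, which immediately forces their cardinalities to agree. To do this I would set up a cycle of inclusions: every direct summand of $G$ is (isomorphic to) a retract, every retract is an $r$-image, and every $r$-image is isomorphic to a direct summand. Closing this cycle shows each of the three families of isomorphism classes is contained in the next, hence all three are equal.

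First I would dispatch the two elementary directions straight from the definitions, neither of which needs any hypothesis on $G$. If $H$ is a direct summand, say $G = H \oplus K$, then the projection $\pi \colon G \to H$ satisfies $\pi \circ i = id_H$ for the inclusion $i \colon H \to G$, so $H$ is a retract of $G$; this gives $(3) \subseteq (2)$. Likewise, if $H$ is a retract via a retraction $r \colon G \to H$, then taking $f = r$ and $g = i$ we get $f \circ g = r \circ i = id_H$, so $H$ is an $r$-image of $G$; this gives $(2) \subseteq (1)$. Both directions are purely formal.

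The key step is to close the cycle by showing every $r$-image of $G$ is isomorphic to a direct summand, i.e. $(1) \subseteq (3)$, and this is where the abelian hypothesis enters. By Lemma \ref{409}, an $r$-image $H$ of $G$ is precisely a semidirect factor of $G$, so $G$ is exhibited as a split extension of a normal subgroup $K$ by a copy of $H$. Because $G$ is abelian, every subgroup is normal and the conjugation action defining the semidirect product is trivial, so this semidirect decomposition is in fact an internal direct decomposition $G \cong H \oplus K$. Hence $H$ is isomorphic to a direct summand of $G$.

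Combining the inclusions $(3) \subseteq (2) \subseteq (1) \subseteq (3)$ shows that the three sets of isomorphism classes coincide, and therefore have equal cardinality. I expect the only genuine obstacle to be the careful bookkeeping of the ``up to isomorphism'' qualifier when passing from an abstract $r$-image to an honest subgroup summand; once Lemma \ref{409} is invoked, this reduces to the standard fact that a split extension of abelian groups is a direct product.
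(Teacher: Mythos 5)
Your proof is correct and follows essentially the same route as the paper: both arguments reduce the equivalence of $r$-images and direct summands to Lemma \ref{409} (using abelianness to turn a semidirect decomposition into a direct one) and handle the retract comparisons by elementary definitional manipulations. The only difference is organizational---you close a cycle of three inclusions where the paper proves two pairwise equivalences---which does not change the substance.
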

\begin{proof}
$(1)\: \& \:(3)$: this is a direct result of Lemma \ref{409}.

$(1) \: \& \: (2)$: by definition, any retract of $G$ is an r-image of it. Also, for any r-image $H$ of $G$,  there exist homomorphisms $f :G \longrightarrow H$  and $g :H\longrightarrow G$  such that $f\circ g = id_H$. It is easy to show that $g(H)\: \big( \cong H \big)$ is a retacrt of $G$.
\end{proof}

\begin{proposition}\label{101}
Let $G$ be a finitely generated abelian group with the following form:
\[
\mathbb{Z}_{p_{1}^{\alpha_1}}^{(k_{1})}\oplus \mathbb{Z}_{p_{2}^{\alpha_2}}^{(k_{2})} \oplus \cdots \oplus \mathbb{Z}_{p_{n}^{\alpha_n}}^{(k_{n})},
\]
where for $i\neq j$, $p_{i}^{\alpha_i}\neq p_{j}^{\alpha_j}$, $p_i$'s are prime numbers, $\alpha_i$'s are non-negative integers, $\mathbb{Z}_{p_{i}^{\alpha_i}}^{(k_{i})}$ is the direct sum of $k_i$ copies of $\mathbb{Z}_{p_{i}^{\alpha_i}}$, and $\mathbb{Z}_{1}=\mathbb{Z}$. Then the number of direct summands of $G$, up to isomorphism,  is equal to
\[
(k_{1}+1)\times \cdots \times (k_{n}+1).
\]
\end{proposition}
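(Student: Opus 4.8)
The plan is to reduce the count to an explicit list of the isomorphism classes of direct summands of $G$, and then to read off the answer from the uniqueness part of the fundamental theorem of finitely generated abelian groups. Concretely, I claim that a group $H$ is isomorphic to a direct summand of $G$ if and only if
\[
H \cong \mathbb{Z}_{p_{1}^{\alpha_1}}^{(j_{1})}\oplus \cdots \oplus \mathbb{Z}_{p_{n}^{\alpha_n}}^{(j_{n})} \qquad \text{for some integers } 0\leq j_i \leq k_i .
\]
Granting this, the proposition follows at once: by uniqueness of the primary decomposition, two such groups are isomorphic exactly when their tuples $(j_1,\dots,j_n)$ agree, so the isomorphism classes of direct summands correspond bijectively to the tuples $(j_1,\dots,j_n)$ with $0\leq j_i\leq k_i$, of which there are precisely $(k_1+1)\times\cdots\times(k_n+1)$.

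For the easy direction of the claim I would simply exhibit a complement: if $H$ has the form above, put $K=\mathbb{Z}_{p_1^{\alpha_1}}^{(k_1-j_1)}\oplus\cdots\oplus\mathbb{Z}_{p_n^{\alpha_n}}^{(k_n-j_n)}$, which makes sense because each $j_i\leq k_i$, and observe that $H\oplus K\cong G$, so that $H$ is indeed a direct summand.

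For the converse, suppose $G\cong H\oplus K$. Then $H$ is finitely generated abelian, so the fundamental theorem decomposes it into primary cyclic summands together with a free part. Since the multiset of elementary divisors of $H$ together with that of $K$ must reproduce the multiset of elementary divisors of $G$ (and likewise for the torsion-free ranks), and since the pairwise distinct prime powers $p_i^{\alpha_i}$ make the cyclic groups $\mathbb{Z}_{p_i^{\alpha_i}}$ pairwise non-isomorphic, the number of factors of $H$ isomorphic to $\mathbb{Z}_{p_i^{\alpha_i}}$ is forced to be some $j_i$ with $0\leq j_i\leq k_i$. Thus $H$ has exactly the claimed form.

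Since the argument is built entirely on the existence and uniqueness of the primary decomposition, I do not anticipate a real obstacle. The only delicate points are organizational: handling the (at most one) free summand on the same footing as the torsion summands through the convention $\mathbb{Z}_1=\mathbb{Z}$, and checking that distinct prime powers---even when they share an underlying prime but differ in exponent---give non-isomorphic indecomposable cyclic factors, so that distinct tuples never yield isomorphic summands and the bijection asserted in the first paragraph is genuine.
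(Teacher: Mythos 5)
Your proposal is correct and rests on the same two pillars as the paper's proof: exhibiting the obvious complement for the easy direction, and invoking the uniqueness of the primary (elementary divisor) decomposition of finitely generated abelian groups to force every summand into the form $\mathbb{Z}_{p_1^{\alpha_1}}^{(j_1)}\oplus\cdots\oplus\mathbb{Z}_{p_n^{\alpha_n}}^{(j_n)}$ with $0\leq j_i\leq k_i$. The only difference is organizational: the paper proceeds block by block (one prime power, then two, then induction on $n$), whereas you handle all blocks simultaneously, which is slightly cleaner but mathematically identical.
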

\begin{proof}
The proof is in three steps.

Step One. For each $1\leq i\leq n$, the number of direct summands of $\mathbb{Z}_{p_{i}^{\alpha_i}}^{(k_{i})}$, up to isomorphism, is equal to $k_i +1$.

For this, it is obvious that for every $0\leq t \leq k_i$, $\mathbb{Z}_{p_{i}^{\alpha_i}}^{(t)}$ is a direct summand of $\mathbb{Z}_{p_{i}^{\alpha_i}}^{(k_{i})}$ and for each $0 \leq t \neq t' \leq k_i$, we have $\mathbb{Z}_{p_{i}^{\alpha_i}}^{(t)} \not \cong \mathbb{Z}_{p_{i}^{\alpha_i}}^{(t')}$. Now, suppose that $C$ is a direct summand of $\mathbb{Z}_{p_{i}^{\alpha_i}}^{(k_{i})}$. There exists a subgroup $D$ of $\mathbb{Z}_{p_{i}^{\alpha_i}}^{(k_{i})}$ such that $\mathbb{Z}_{p_{i}^{\alpha_i}}^{(k_{i})} \cong C \oplus D$. By \cite[Corollary 2.1.7]{Hu},  $C$ is a finitely generated abelian group. Suppose that $C\cong \mathbb{Z}_{q_{1}^{\beta_1}}^{(l_{1})}\oplus \cdots \oplus \mathbb{Z}_{q_{s}^{\beta_s}}^{(l_{s})}$. Since $C$ is a direct summand of $\mathbb{Z}_{p_{i}^{\alpha_i}}^{(k_{i})}$ and for every $1\leq j \leq s$, $\mathbb{Z}_{q_{j}^{\beta_j}}^{(l_{j})}$ is a direct summand of $C$, so for every $1\leq j\leq s$, $\mathbb{Z}_{q_{j}^{\beta_j}}^{(l_{j})}$ is a direct summand of $\mathbb{Z}_{p_{i}^{\alpha_i}}^{(k_{i})}$. Now, by uniqueness of decomposition of finitely generated abelian groups \cite[Theorem 2.2.6, (iii)]{Hu}, for any $j=1,\cdots ,s$, there exists an $i=1,\cdots ,n$ such that  $q_j =p_i$ and $\beta_j =\alpha_i$. Hence, $C\cong \mathbb{Z}_{p_{i}^{\alpha_i}}^{(t)}$ for some $0\leq t\leq k_i$.

Step Two. The number of direct summands of $\mathbb{Z}_{p_{i}^{\alpha_i}}^{(k_{i})} \oplus \mathbb{Z}_{p_{j}^{\alpha_j}}^{(k_{j})}$ for $i\neq j$, up to isomorphism, is equal to $(k_i +1)(k_j +1)$.

It is easy to see that for every $0\leq t\leq k_i$ and $0\leq s\leq k_j$, $\mathbb{Z}_{p_{i}^{\alpha_i}}^{(t)}\oplus \mathbb{Z}_{p_{j}^{\alpha_j}}^{(s)}$ is a direct summand of $\mathbb{Z}_{p_{i}^{\alpha_i}}^{(k_{i})} \oplus \mathbb{Z}_{p_{j}^{\alpha_j}}^{(k_{j})}$. Now similar to Step One, suppose that $C$ is a direct summand of $\mathbb{Z}_{p_{i}^{\alpha_i}}^{(k_{i})} \oplus \mathbb{Z}_{p_{j}^{\alpha_j}}^{(k_{j})}$ and $D$ is a subgroup of $\mathbb{Z}_{p_{i}^{\alpha_i}}^{(k_{i})} \oplus \mathbb{Z}_{p_{j}^{\alpha_j}}^{(k_{j})}$ such that $\mathbb{Z}_{p_{i}^{\alpha_i}}^{(k_{i})} \oplus \mathbb{Z}_{p_{j}^{\alpha_j}}^{(k_{j})} \cong C\oplus D$. Suppose $C \cong\mathbb{Z}_{q_{1}^{\beta_1}}^{(l_{1})}\oplus \cdots \oplus \mathbb{Z}_{q_{s}^{\beta_s}}^{(l_{s})}$. Since for every $1\leq m\leq s$, $\mathbb{Z}_{q_{m}^{\beta_m}}^{(l_{m})}$ is a direct summand of $\mathbb{Z}_{p_{i}^{\alpha_i}}^{(k_{i})} \oplus \mathbb{Z}_{p_{j}^{\alpha_j}}^{(k_{j})}$, so similar to the above argument, $C\cong  \mathbb{Z}_{p_{i}^{\alpha_i}}^{(t)} \oplus  \mathbb{Z}_{p_{j}^{\alpha_j}}^{(s)}$ for some $0\leq t\leq k_i$ and $0\leq s \leq k_j$.

Step Three: the number of direct summands of $\mathbb{Z}_{p_{1}^{\alpha_1}}^{(k_{1})}\oplus \mathbb{Z}_{p_{2}^{\alpha_2}}^{(k_{2})} \oplus \cdots \oplus \mathbb{Z}_{p_{n}^{\alpha_n}}^{(k_{n})}$, up to isomorphism, is equal to $(k_1 +1)(k_2 +1) \cdots (k_n +1)$.

It is obvious by the induction on n and using Step Two.
\end{proof}
\section{The Capacity of Mooer Spaces}
In this section, we compute the capacity of Moore spaces exactly. Also, we give the exact capacity of wedge sum of finitely many Moore spaces of different degrees. In particular, we compute the capacity of wedge sum of finitely many spheres of the same or different dimensions.
\begin{definition}\citep{4}.
A Moore space of degree $n$ $(n\geq 2)$ is a simply connected $CW$-space X with a single non-vanishing homology group of degree $n$, that is $\tilde{H}_{i}(X,\mathbb{Z})=0$ for $i\neq n$. We write $X=M(A,n)$ where $A\cong \tilde{H}_{n} (X,\mathbb{Z})$.
\end{definition}
Note that for $n=1$, the Moore space $M(A,1)$ can not be defined, because of some problems in existence and uniqueness of the space (for more details see \cite{3}).

The $(n-1)$-fold suspension \cite{1} of a pseudo projective plane $\mathbb{P}_q = \mathbb{S}^1\cup_q e_2$ \cite{4}, is a Moore space of degree $n$, that is
\[
\Sigma^{n-1}\mathbb{P}_q =M(\mathbb{Z}_q ,n).
\]
Recall that $\mathbb{P}_q$ is the sapce obtained by attaching a 2-cell $e_2$ to $\mathbb{S}^1$ by a map $q: \mathbb{S}^1 \longrightarrow \mathbb{S}^1$ of degree $q$.

It is also obvious that the sphere $\mathbb{S}^n$ is also a Moore space, $\mathbb{S}^n = M(\mathbb{Z}, n)$.
\begin{theorem}\label{-2}\cite{4}.
The homotopy type of a CW complex Moore space $M(A,n)$ is uniquely determined by $A$ and $n$ ($n > 1$).
\end{theorem}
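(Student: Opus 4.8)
The plan is to prove uniqueness by comparing an arbitrary Moore space of type $(A,n)$ with a single standard model built algebraically from $A$, and then to upgrade a homology isomorphism into a homotopy equivalence using the homology Whitehead theorem (Theorem \ref{3031}). First I would construct the standard model. Choose a free presentation of the abelian group $A$,
\[
0 \longrightarrow R \stackrel{\partial}{\longrightarrow} F \longrightarrow A \longrightarrow 0 ,
\]
with $F$ free on an index set $J$ and $R$ free on an index set $I$. Let $W=\bigvee_{j\in J} S^n_j$. For $n\geq 2$ the wedge $W$ is $(n-1)$-connected, so by the Hurewicz theorem $\pi_n(W)\cong H_n(W)\cong F$. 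For each $i\in I$ I represent the element $\partial(r_i)\in F\cong\pi_n(W)$ by a map $\varphi_i:S^n\to W$ and attach an $(n+1)$-cell along it. The resulting CW-complex $M$ is simply connected, and its cellular chain complex has boundary $\partial_{n+1}=\partial$, whence $H_n(M)\cong F/\partial(R)\cong A$ and $\tilde H_i(M)=0$ for $i\neq n$; thus $M$ is a Moore space of type $(A,n)$.

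Second, given any Moore space $X$ of type $(A,n)$, I would construct a map $g:M\to X$ inducing an isomorphism on $H_n$. Since $X$ is simply connected with $\tilde H_i(X)=0$ for $i<n$, an inductive application of the Hurewicz theorem shows that $X$ is $(n-1)$-connected and $\pi_n(X)\cong H_n(X)\cong A$. I define $g$ first on $W$ by sending the $j$-th sphere to a representative of the image of the basis element $e_j$ under the composite $F\to A\cong\pi_n(X)$; this $g_0:W\to X$ realizes the presentation surjection $F\to A$ on $H_n$. For each relation $r_i$ the composite $g_0\circ\varphi_i:S^n\to X$ represents the image of $\partial(r_i)$ in $A$, which is zero; hence $g_0\circ\varphi_i$ is null-homotopic and $g_0$ extends over the cell $e_i$. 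Carrying this out over all $i\in I$ yields $g:M\to X$, and a chain-level computation shows that $g$ induces on $H_n$ the isomorphism $A=H_n(M)\to H_n(X)=A$ obtained from $F\to A$ by passing to $F/\partial(R)$.

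Finally, since $M$ and $X$ are both simply connected and $g$ induces an isomorphism on every homology group (all groups in degrees other than $n$ vanish on both sides), Theorem \ref{3031} shows that $g$ is a homotopy equivalence, so $X\simeq M$. Applying this to two arbitrary Moore spaces $X$ and $Y$ of type $(A,n)$, built against the same fixed presentation of $A$, gives $X\simeq M\simeq Y$, which proves that the homotopy type depends only on $A$ and $n$.

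I expect the main obstacle to be the careful use of the Hurewicz isomorphism to realize homology classes by maps out of spheres and to verify the vanishing of each $g_0\circ\varphi_i$, together with confirming that $g$ induces an isomorphism, and not merely a homomorphism, on $H_n$. A secondary point requiring care is that the index sets $J$ and $I$ may be infinite (so that $A$ need not be finitely generated), which forces all the sphere attachments and cell extensions to be performed simultaneously over possibly infinite families; this is harmless at the cellular level but must be stated precisely.
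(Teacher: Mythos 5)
Your proof is correct and is essentially the classical argument from the cited source (Baues; see also Hatcher): build a model $M$ from a free presentation of $A$, realize the identity of $A$ by a map $M\to X$ using Hurewicz, and conclude with the homology Whitehead theorem (Theorem \ref{3031}). The paper itself states this result without proof, quoting \cite{4}, so there is nothing to compare beyond noting that your route is the standard one and that you have handled the relevant subtleties (infinite index sets, identification of the cellular boundary with $\partial$) correctly.
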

Let $\mathbf{Ab}$ be the category of abelian groups and for $n\geq 2$ let
$\mathbf{M}^n \subset hTop $ be the full subcategory of the category $hTop$ consisting of spaces $M(A, n)$
with $A \in \mathbf{Ab}$.
\begin{theorem}\label{0}\cite{4}.
For any $n\geq 2$, the functor $H_n :\mathbf{M}^n \longrightarrow \mathbf{Ab}$ is a detecting functor, that is, for each
abelian group $A$ there is a Moore space $M(A, n)$, whose homotopy type is well defined by $(A, n)$. Moreover, for each homomorphism $\phi : A \longrightarrow B$,
there is a map $\bar{\phi}: M(A, n) \longrightarrow M(B, n)$ with $H_n (\bar{\phi})= \phi$.
\end{theorem}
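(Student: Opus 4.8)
The plan is to verify the three defining properties of a detecting functor for $H_n\colon \mathbf{M}^n\to\mathbf{Ab}$: representativity, fullness, and sufficiency (reflecting isomorphisms); the uniqueness of homotopy type will then follow formally from the latter two. For representativity, given an abelian group $A$ I would start from a free presentation $0\to R\xrightarrow{d} F\xrightarrow{\epsilon} A\to 0$, which exists since every subgroup of a free abelian group is free abelian. Choosing bases $\{e_\alpha\}$ of $F$ and $\{f_\beta\}$ of $R$, I form the wedge $W=\bigvee_\alpha \mathbb{S}^n_\alpha$; this is $(n-1)$-connected, so by the Hurewicz theorem $\pi_n(W)\cong H_n(W)\cong F$. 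Representing each relation $d(f_\beta)\in F\cong\pi_n(W)$ by a map $g_\beta\colon\mathbb{S}^n\to W$ and attaching an $(n+1)$-cell along each $g_\beta$ yields a space $X$. Since $n\geq 2$, attaching cells of dimension $n+1\geq 3$ keeps $X$ simply connected, and its cellular chain complex reduces to $d\colon R\to F$ in degrees $n+1$ and $n$; hence $H_n(X)\cong F/d(R)\cong A$, $H_{n+1}(X)\cong\ker d=0$, and $H_i(X)=0$ otherwise, so $X=M(A,n)$.

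For fullness, given $\phi\colon A\to B$ I would construct $\bar\phi$ by obstruction theory on the model above. Using Hurewicz again, $\pi_n(M(B,n))\cong B$, so I first define $h\colon W\to M(B,n)$ on the $n$-skeleton by sending the $\alpha$-th sphere to a representative of $\phi(\epsilon(e_\alpha))\in B$, which makes $h_*=\phi\circ\epsilon$ on $\pi_n$. To extend $h$ across the $(n+1)$-cell glued along $g_\beta$, the obstruction is the class $h_*(d(f_\beta))=\phi(\epsilon(d(f_\beta)))=0$, using $\epsilon\circ d=0$; thus each composite $h\circ g_\beta$ is nullhomotopic and $h$ extends to $\bar\phi\colon M(A,n)\to M(B,n)$. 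Since $h_*=\phi\circ\epsilon$ descends to $\phi$ on $A=F/d(R)$, we obtain $H_n(\bar\phi)=\phi$.

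For sufficiency, if $f\colon M(A,n)\to M(B,n)$ has $H_n(f)$ an isomorphism, then since both spaces have homology concentrated in degree $n$, $f$ induces isomorphisms on all $H_i$; as both are simply connected CW-complexes, Theorem \ref{3031} shows $f$ is a homotopy equivalence. Hence $H_n$ is detecting, and uniqueness follows: a map realizing $\mathrm{id}_A$ between any two realizations of $A$ is a homotopy equivalence, so the homotopy type depends only on $(A,n)$. I expect fullness to be the main obstacle, as it is the only step needing genuine obstruction theory: one must carefully use the Hurewicz identifications $\pi_n\cong H_n$ at both ends and verify that the relations are killed so the skeletal map extends over every $(n+1)$-cell. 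Representativity and sufficiency, by contrast, follow almost formally from the resolution-plus-cell-attachment construction and from the homology Whitehead theorem, respectively.
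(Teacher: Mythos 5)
The paper offers no proof of this statement; it is quoted verbatim from Baues \cite{4}. Your argument is the standard one (free resolution $0\to R\to F\to A\to 0$, wedge of $n$-spheres realizing $F$, $(n+1)$-cells killing $d(R)$, Hurewicz to identify $\pi_n$ with $H_n$ on both source and target, obstruction vanishing because $\epsilon\circ d=0$, and the homology Whitehead theorem for sufficiency), and it is essentially correct. The only point deserving one extra sentence is that your construction realizes $\phi$ as a map out of the \emph{standard} CW model $X_A$, whereas fullness is asserted for arbitrary objects of $\mathbf{M}^n$; this is repaired exactly as you indicate at the end --- realizing $\mathrm{id}_A$ gives a map $X_A\to M$ to any other realization, sufficiency makes it a homotopy equivalence, and composing with these equivalences transports realizability of $\phi$ to arbitrary source and target. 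With that composition made explicit, the proof is complete and agrees with the classical treatment in \cite{4}.
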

Note that homotopy class $\bar{\phi}$ in above theorem is not uniquely determined by $\phi$.

For n $(n\geq 2)$, $\mathbf{FM}^n$  is the full subcategory of $\mathbf{M}^n$ consists of all
Moore spaces $M(A, n)$ where $A$ is a finitely generated abelian group (see \cite{4}). For
each such group we have a direct sum decomposition
\[
\mathbb{Z}_{q_1}\oplus \mathbb{Z}_{q_2} \oplus \cdots \oplus \mathbb{Z}_{q_{r}}, \quad q_i \geq 0
\]
of cyclic groups. Associated with this isomorphism there is a homotopy
equivalence
\[
M(A, n) \simeq \Sigma^{n-1} \Big( \mathbb{P}_{q_{1}}\vee \mathbb{P}_{q_{2}} \vee \cdots \vee \mathbb{P}_{q_{r}}\Big),
\]
where $\mathbb{P}_{n}=\mathbb{S}^1 \cup_{n} e^2 $ is a pseudo-projective plane if $n > 0$, and  $\mathbb{P}_0 =\mathbb{S}^1$ (see \cite{4}).
\begin{lemma}\label{-5}
Let $A$ be an abelian group and $n\geq 2$. Then a space $X$ is  homotopy dominated by  Moore space $M(A,n)$ if and only if $X$ is of the same homotopy type as $M(B,n)$ where $B$ is a direct summand of $A$.
\end{lemma}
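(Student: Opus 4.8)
The plan is to prove the two implications separately, using the detecting functor $H_n$ (Theorem \ref{0}) together with the simply-connected Whitehead theorem (Theorem \ref{3031}) and the uniqueness of Moore spaces (Theorem \ref{-2}).

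For the ``if'' direction, suppose $B$ is a direct summand of $A$, so that $A \cong B \oplus C$ and there are homomorphisms $\iota : B \to A$ and $\rho : A \to B$ with $\rho \circ \iota = \mathrm{id}_B$. Using Theorem \ref{0}, I would realize these as maps $\bar\iota : M(B,n) \to M(A,n)$ and $\bar\rho : M(A,n) \to M(B,n)$ with $H_n(\bar\iota) = \iota$ and $H_n(\bar\rho) = \rho$. Then $\bar\rho \circ \bar\iota : M(B,n) \to M(B,n)$ induces the identity on $H_n$, and since every other reduced homology group of $M(B,n)$ vanishes, it induces isomorphisms on all homology groups. As $M(B,n)$ is simply connected, Theorem \ref{3031} shows $\bar\rho \circ \bar\iota$ is a homotopy equivalence. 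Note that one cannot conclude $\bar\rho \circ \bar\iota \simeq \mathrm{id}$ directly, since the homotopy class of $\bar\phi$ is not determined by $\phi$; instead, composing $\bar\rho$ with a homotopy inverse $e$ of $\bar\rho \circ \bar\iota$ yields maps $\bar\iota$ and $e \circ \bar\rho$ whose composite is homotopic to $\mathrm{id}_{M(B,n)}$. Hence $M(B,n)$, and therefore any $X \simeq M(B,n)$, is homotopy dominated by $M(A,n)$.

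For the ``only if'' direction, suppose $X$ is homotopy dominated by $M(A,n)$, witnessed by $f : X \to M(A,n)$ and $g : M(A,n) \to X$ with $g \circ f \simeq \mathrm{id}_X$. First I would note that, being dominated by a CW-complex, $X$ has the homotopy type of a CW-complex, so I may take $X$ to be a CW-complex. Applying $\pi_1$ and the homology functors to $g \circ f \simeq \mathrm{id}_X$, each group $\pi_1(X)$ and $\tilde H_k(X)$ becomes a retract of the corresponding invariant of $M(A,n)$ via $f_*$ and $g_*$. Since $M(A,n)$ is simply connected, $\pi_1(X)$ is a retract of the trivial group, so $X$ is simply connected; and since $\tilde H_k(M(A,n)) = 0$ for $k \neq n$, we get $\tilde H_k(X) = 0$ for $k \neq n$. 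Thus $X$ is a simply connected CW-complex with a single non-vanishing homology group in degree $n$, that is, a Moore space of degree $n$ with $B := H_n(X)$, and by Theorem \ref{-2} we have $X \simeq M(B,n)$. It remains to identify $B$ as a direct summand of $A$: the relation $g_* \circ f_* = \mathrm{id}$ on $H_n(X) = B$ exhibits $B$ as a retract of $H_n(M(A,n)) \cong A$, and by Corollary \ref{408} a retract of the abelian group $A$ is exactly a direct summand, which completes the proof.

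I expect the main obstacle to be the ``if'' direction: because $H_n$ is not faithful, realizing the algebraic splitting $\rho \circ \iota = \mathrm{id}_B$ does not immediately yield a topological retraction, and one must pass through Theorem \ref{3031} to upgrade ``induces the identity on homology'' to ``homotopy equivalence'' before correcting by a homotopy inverse to obtain genuine domination. A secondary point requiring care is the justification that $X$ may be replaced by a CW-complex, which is precisely what makes the Moore-space characterization and Theorem \ref{-2} applicable.
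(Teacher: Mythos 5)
Your proof is correct and follows essentially the same route as the paper: realize the algebraic splitting $\rho\circ\iota=\mathrm{id}_B$ via Theorem \ref{0} and upgrade the resulting homology isomorphism to a homotopy equivalence for the ``if'' direction, and use that homology groups of a dominated space are direct summands for the ``only if'' direction. If anything you are more careful than the paper, which asserts $g\circ f\simeq \mathrm{id}_{M(B,n)}$ outright where your correction by a homotopy inverse is the cleaner justification of domination.
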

\begin{proof}
Suppose that $X$ is  homotopy dominated by $M(A,n)$. Then $H_i (X)$ is a  direct summand of $H_i (M(A,n))$, for each $i\geq 0$. Hence $H_i (X)=0$ for each $i\neq n$ and $H_n (X)=B$ where $B$ is a direct summand of $A$.  Therefore, $X$ is a Moore space of the form $M(B,n)$. Conversely, suppose that $f_0 :B\longrightarrow A$ and $g_0 :A \longrightarrow B$ are homomorphisms such that $g_0 \circ f_0 =id_{B}$. By Theorem \ref{0}, we have $H_n (M(A,n))=A$ and $H_n (M(B,n))=B$ and there exist maps $g:M(A,n)\longrightarrow M(B,n)$ and $f:M(B,n)\longrightarrow M(A,n)$ such that $H_n ([g])=g_0$ and $H_n ([f])=f_0$. We have $H_n ([g\circ f])=H_n ([g]\circ [f])=H_n ([g])\circ H_n ([f])=g_0 \circ f_0 =id_{B}$. On the other hand, $H_n ([id_{M(B,n)}])=id_B$. Now, by  sufficiency condition of $H_n$, $g\circ f$ is an equivalence on $M(B,n)$ and so by Lemma \ref{1-1}, we have $g\circ f\simeq id_{M(B,n)}$. Hence $M(B,n)$ is homotopy dominated by $M(A,n)$.
\end{proof}
The previous lemma and Theorem \ref{-2} imply the following result.
\begin{proposition}\label{100}
There is a one-to-one corresponding between spaces homotopy dominated by $M(A,n)$ up to homotpy equivalence and direct summands of $A$ up to isomorphism, for $n\geq 2$.
\end{proposition}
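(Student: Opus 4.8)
The plan is to exhibit an explicit bijection $\Phi$ from the set of direct summands of $A$, taken up to isomorphism, to the set of spaces homotopy dominated by $M(A,n)$, taken up to homotopy equivalence, sending the class of a direct summand $B$ to the homotopy type of the Moore space $M(B,n)$. Essentially all of the geometric content has already been isolated in Lemma \ref{-5}, so what remains is the bookkeeping: checking that this assignment is well defined, lands in the correct target, and is both surjective and injective.

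First I would verify that $\Phi$ is well defined and takes values in the target set. If $B\cong B'$ are isomorphic direct summands of $A$, then Theorem \ref{-2} guarantees $M(B,n)\simeq M(B',n)$, so the homotopy type $[M(B,n)]$ depends only on the isomorphism class of $B$; thus $\Phi$ is well defined on isomorphism classes. Moreover, the converse direction of Lemma \ref{-5} shows that whenever $B$ is a direct summand of $A$, the space $M(B,n)$ is genuinely homotopy dominated by $M(A,n)$, so $\Phi([B])$ really is an element of the target set. Surjectivity is then immediate from the forward direction of Lemma \ref{-5}: any space $X$ homotopy dominated by $M(A,n)$ has the homotopy type of some $M(B,n)$ with $B$ a direct summand of $A$, whence $[X]=\Phi([B])$ lies in the image.

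For injectivity, I would suppose $\Phi([B])=\Phi([B'])$, that is $M(B,n)\simeq M(B',n)$, and apply the functor $H_n$. Since homology is a homotopy invariant and, by Theorem \ref{0}, $H_n(M(B,n))\cong B$ and $H_n(M(B',n))\cong B'$, the homotopy equivalence forces $B\cong B'$, so $[B]=[B']$. I do not anticipate any genuine obstacle here: the substantive step, identifying the spaces dominated by $M(A,n)$ with the Moore spaces built on direct summands of $A$, is exactly the statement of Lemma \ref{-5}, and the only remaining observation is that the invariant $H_n$ simultaneously recovers the group from its Moore space and is preserved under homotopy equivalence. This is precisely what makes the two equivalence relations, homotopy equivalence of dominated spaces and isomorphism of summands, correspond under $\Phi$, yielding the claimed one-to-one correspondence.
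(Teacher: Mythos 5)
Your proposal is correct and follows essentially the same route as the paper, which simply derives Proposition \ref{100} from Lemma \ref{-5} together with Theorem \ref{-2} without spelling out the details. You have merely made explicit the bijection $[B]\mapsto[M(B,n)]$ and the routine checks (well-definedness via Theorem \ref{-2}, surjectivity and containment via Lemma \ref{-5}, injectivity via the homotopy invariance of $H_n$), which is exactly the intended argument.
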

The following proposition is a consequence of  Lemma \ref{101} and Proposition \ref{100}.
\begin{proposition}\label{8000}
Let $X$ be a Moore Space $M(A,m)$ $(m\geq 2)$, where $A$ is a finitely generated abelian group of the form
\[
\mathbb{Z}_{p_{1}^{\alpha_1}}^{(k_{1})}\oplus \mathbb{Z}_{p_{2}^{\alpha_2}}^{(k_{2})} \oplus \cdots \oplus \mathbb{Z}_{p_{n}^{\alpha_n}}^{(k_{n})},
\]
where for $i\neq j$, $p_{i}^{\alpha_i}\neq p_{j}^{\alpha_j}$, $p_i$'s are prime numbers, $\alpha_i$'s are non-negative integers, $\mathbb{Z}_{p_{i}^{\alpha_i}}^{(k_{i})}$ is the direct product of $k_i$ copies of $\mathbb{Z}_{p_{i}^{\alpha_i}}$ and $\mathbb{Z}_{1}=\mathbb{Z}$. Then the capacity of $X$ is exactly
\[
(k_{1}+1)\times \cdots \times (k_{n}+1).
\]
\end{proposition}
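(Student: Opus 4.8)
The plan is to obtain the statement by chaining together the definition of capacity with the two preceding results, so the proof is essentially a bookkeeping argument once the right translation is in place. First I would observe that since $A$ is finitely generated, the Moore space $X=M(A,m)$ has the homotopy type of a finite polyhedron: by the decomposition recalled above it is homotopy equivalent to a suspension $\Sigma^{m-1}(\mathbb{P}_{q_1}\vee \cdots \vee \mathbb{P}_{q_r})$ of a wedge of pseudo-projective planes. Consequently $X$ lies in the class of spaces for which, by the shape-theoretic correspondence cited in the Introduction (\cite{18}), shape domination may be replaced by homotopy domination and shapes by homotopy types.

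With this in hand, the capacity $C(X)$, defined as the cardinality of the set of shapes of spaces $Y$ with $\mathcal{S}h(Y)\leqslant \mathcal{S}h(X)$, coincides with the number of homotopy types of spaces homotopy dominated by $X=M(A,m)$. This is the single conceptual point of the argument: the passage from the shape-theoretic definition of capacity to the purely homotopy-theoretic count, which is legitimate precisely because $M(A,m)$ is (homotopy equivalent to) a polyhedron.

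Next I would invoke Proposition \ref{100}, which supplies a one-to-one correspondence between the homotopy types of spaces homotopy dominated by $M(A,m)$ and the direct summands of $A$ up to isomorphism (valid since $m\geq 2$). Therefore $C(X)$ equals the number of direct summands of $A$, counted up to isomorphism.

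Finally I would apply Lemma \ref{101} to the prescribed decomposition $\mathbb{Z}_{p_{1}^{\alpha_1}}^{(k_{1})}\oplus \cdots \oplus \mathbb{Z}_{p_{n}^{\alpha_n}}^{(k_{n})}$ of $A$, which counts exactly these direct summands and yields $(k_{1}+1)\times \cdots \times (k_{n}+1)$. Combining the three identifications gives $C(X)=(k_{1}+1)\times \cdots \times (k_{n}+1)$, as claimed. I do not expect a genuine analytic obstacle here; the only step demanding care is confirming that $X$ belongs to the class of spaces for which capacity is computed via homotopy domination, since every subsequent equality is a direct citation of Proposition \ref{100} and Lemma \ref{101}.
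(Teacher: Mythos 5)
Your proposal is correct and follows essentially the same route as the paper, which deduces Proposition \ref{8000} directly as a consequence of Proposition \ref{100} and Lemma \ref{101}. The only difference is that you spell out the justification (via the shape/homotopy correspondence for polyhedra) for computing capacity as a count of homotopy types of homotopy-dominated spaces, a point the paper relegates to the introduction.
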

As an example, by Proposition \ref{100} the capacity of the Moore space $M(\mathbb
Q,n)$ is exactly 2. Recall that $\mathbb{Q}$ is not the direct sum of any family of its proper subgroups.   Also, by Proposition \ref{8000} the capacity of $M(\mathbb{Z}_2 \oplus \mathbb{Z}_2 \oplus \mathbb{Z}_3 \oplus\mathbb{Z} \oplus\mathbb{Z},n)$, $M(\mathbb{Z}_9 \oplus \mathbb{Z}_{64})$, $M(\mathbb{Z},n)$ and $M(\mathbb{Z}_{p^m},n)$ are exactly 18, 4, 2 and 2, respectively.
\begin{remark}
The exact computation of capacity of the wedge sum of finitely many spheres with the same or  different dimesnions seems interesting. In \cite{16}, it has been mentioned that the capacity of $\bigvee_k \mathbb{S}^1$ equals to $k+1$, but the proof does not work for $n\geq 2$. Kolodziejczyk in \cite{18} asked the following question:

 Does every polyhedron P with the abelian fundamental group $\pi_1 (P)$ dominate only finitely many different homotopy types?

She proved that two extensive classes of polyhdera, polyhedra with finite fundamental group, and polyhedra $P$ with abelian fundamental groups and finitely generated homology groups $H_i (\tilde{P})$  $(i\geq 2)$,  have finite capacity, where $\tilde{P}$ is the universal covering of $P$ (see \cite{14},\cite{18}). The wedge sum $\mathbb{S}^1 \vee \mathbb{S}^2$ is a simple example of a polyhedron $P$ with abelian fundamental group $\pi_1 (P )$ and infinitely generated homology group $H_2 (\tilde{P}; \mathbb{Z})$ where finiteness of its capacity is still unknown. Note that $\mathbb{S}^1 \vee \mathbb{S}^2$ is neither a Moore space nor an Eilenberg-MacLane space.
\end{remark}
In continue, we compute the capacity of wedge sum of finitely many Moore spaces with the same or  different dimesnions such as $\bigvee_{i\in \{ 1,\cdots ,k\}} \mathbb{S}^n$ and $\mathbb{S}^m \vee \mathbb{S}^n$ $(m,n \geq 2 , m\neq n)$ and in general, $\bigvee_{n\in I} (\vee_{i_n} \mathbb{S}^n)$ where $\vee_{i_n} \mathbb{S}^n$ denotes the wedge sum of $i_n$ copies of $\mathbb{S}^n$. For this, we need the following theorem.
\begin{theorem}\label{7}\cite{3}.
For a wedge sum $\bigvee_{\alpha} X_{\alpha}$ , the inclusions $i_{\alpha} :X_{\alpha}\hookrightarrow \bigvee_{\alpha} X_{\alpha}$ induce an  isomorphism $\bigoplus_{\alpha} i_{\alpha^*}:\bigoplus_{\alpha} \tilde{H}_{n} (X_{\alpha })\longrightarrow \tilde{H}_n (\bigvee_{\alpha}X_{\alpha })$, provided that the wedge sum is formed at basepoints $x_{\alpha}\in X_{\alpha }$, such that the pairs $(X_{\alpha },x_{\alpha })$'s  are good.
\end{theorem}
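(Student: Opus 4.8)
The plan is to realize the wedge as a quotient of the disjoint union and then reduce the statement to the additivity of singular homology. Write $W=\bigvee_{\alpha} X_{\alpha}$ as the quotient space $\big(\coprod_{\alpha} X_{\alpha}\big)/A$, where $A=\{x_{\alpha}\}$ denotes the set of all chosen basepoints, collapsed to the single wedge point. The inclusions $i_{\alpha}:X_{\alpha}\hookrightarrow W$ factor as $X_{\alpha}\hookrightarrow \coprod_{\beta} X_{\beta}\to W$, so it suffices to understand how each summand of the disjoint union contributes to $\tilde{H}_{n}(W)$.

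First I would check that the pair $\big(\coprod_{\alpha} X_{\alpha},\,A\big)$ is good. By hypothesis each $(X_{\alpha},x_{\alpha})$ is good, so $x_{\alpha}$ has an open neighborhood $U_{\alpha}$ in $X_{\alpha}$ that deformation retracts onto $x_{\alpha}$; then $\coprod_{\alpha} U_{\alpha}$ is an open neighborhood of $A$ in $\coprod_{\alpha} X_{\alpha}$ that deformation retracts onto $A$, so the pair is good. The good-pair property (from the same source \cite{3}) then yields a natural isomorphism
\[
\tilde{H}_{n}(W)=\tilde{H}_{n}\big((\coprod_{\alpha} X_{\alpha})/A\big)\;\cong\;H_{n}\big(\coprod_{\alpha} X_{\alpha},\,A\big).
\]

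Next I would split the relative group by additivity over disjoint unions. As pairs, $\big(\coprod_{\alpha} X_{\alpha},\,A\big)=\coprod_{\alpha}(X_{\alpha},x_{\alpha})$, and since singular homology sends topological coproducts to direct sums, one obtains
\[
H_{n}\big(\coprod_{\alpha} X_{\alpha},\,A\big)\;\cong\;\bigoplus_{\alpha} H_{n}(X_{\alpha},\,x_{\alpha})\;\cong\;\bigoplus_{\alpha}\tilde{H}_{n}(X_{\alpha}),
\]
where the last step is the standard identification of the relative homology of a space modulo a point with its reduced homology. Tracing a class in $\tilde{H}_{n}(X_{\alpha})$ through this composite shows that it lands on the image of its fundamental cycle under $i_{\alpha}$, so the total isomorphism is precisely $\bigoplus_{\alpha} i_{\alpha*}$, as asserted.

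The main obstacle I anticipate is bookkeeping rather than conceptual depth: one must verify that both the good-pair isomorphism and the additivity isomorphism are \emph{natural} with respect to the inclusions, so that the abstractly built isomorphism coincides with the map induced by the $i_{\alpha}$. For an infinite index set one must also confirm that singular homology genuinely commutes with the infinite coproduct — this rests on the fact that each singular simplex has compact image and therefore meets only finitely many summands — and that $\coprod_{\alpha} U_{\alpha}$ is open in the coproduct topology. Once naturality is established, the three displayed isomorphisms compose to give the desired statement.
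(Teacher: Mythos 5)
Your proof is correct and is essentially the standard argument for this statement in the cited source (Hatcher, Corollary 2.25): realize the wedge as $\bigl(\coprod_{\alpha}X_{\alpha}\bigr)/A$, apply the good-pair isomorphism $\tilde{H}_{n}(X/A)\cong H_{n}(X,A)$, and then additivity over the disjoint union; the paper itself states the theorem as a citation without giving a proof. The one point to flag is that the paper's stated definition of a good pair ($A$ a deformation retract of $X$ itself) must be read as Hatcher's intended one ($A$ a deformation retract of a neighborhood in $X$), which is exactly the version you correctly use when forming $\coprod_{\alpha}U_{\alpha}$.
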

By a good pair $(X,A)$, we mean a topological space $X$ and a nonempty closed subspace $A$ of $X$, where $A$ is also a deformation retract of $X$. For any CW-complex $X$ and any subcomplex $A$ of $X$, $(X,A)$ is a good pair.
\begin{corollary}\label{1010}
Let $n\geq 2$ be fixed. The capacity of $\bigvee_{\alpha \in I}\mathbb{S}^n $ is finite if and only if $I$ is finite. In particular, $C(\bigvee_{i\in \{ 1,\cdots ,k\}}\mathbb{S}^n)=k+1$, for every $n\geq 1$.
\end{corollary}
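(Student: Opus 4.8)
The plan is to recognize $\bigvee_{\alpha\in I}\mathbb{S}^n$ as a Moore space of degree $n$ and then apply the correspondence of Proposition \ref{100}. First I would fix $n\geq 2$ and observe that, since each $\mathbb{S}^n$ is simply connected for $n\geq 2$, the wedge $\bigvee_{\alpha\in I}\mathbb{S}^n$ is again simply connected. To compute its homology I would use Theorem \ref{7}: each pair $(\mathbb{S}^n,x_\alpha)$ is a CW pair and hence good, so the inclusions induce an isomorphism
\[
\bigoplus_{\alpha\in I}\tilde{H}_i(\mathbb{S}^n)\;\cong\;\tilde{H}_i\Big(\bigvee_{\alpha\in I}\mathbb{S}^n\Big).
\]
Since $\tilde{H}_i(\mathbb{S}^n)=\mathbb{Z}$ for $i=n$ and vanishes otherwise, the reduced homology of the wedge is concentrated in degree $n$ and equals the free abelian group $\mathbb{Z}^{(I)}$ on the index set $I$. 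Thus $\bigvee_{\alpha\in I}\mathbb{S}^n$ is a Moore space $M(\mathbb{Z}^{(I)},n)$.

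Next I would invoke Proposition \ref{100}, which identifies the homotopy types dominated by $M(\mathbb{Z}^{(I)},n)$ with the direct summands of $\mathbb{Z}^{(I)}$ up to isomorphism; hence $C(\bigvee_{\alpha\in I}\mathbb{S}^n)$ equals the number of such summands. The argument then splits into two cases. If $I$ is finite with $|I|=k$, then $\mathbb{Z}^{(I)}\cong\mathbb{Z}^{(k)}$ is of the form treated in Proposition \ref{8000} (with the single factor $\mathbb{Z}_1=\mathbb{Z}$ repeated $k$ times), which yields exactly $k+1$ summands up to isomorphism, namely $\mathbb{Z}^{(t)}$ for $0\leq t\leq k$. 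If $I$ is infinite, I would exhibit infinitely many pairwise non-isomorphic summands: choosing $t$ distinct indices gives a splitting $\mathbb{Z}^{(I)}\cong\mathbb{Z}^{(t)}\oplus\mathbb{Z}^{(I')}$, so $\mathbb{Z}^{(t)}$ is a direct summand for every $t\in\mathbb{N}$, and these have distinct ranks, hence are non-isomorphic. Therefore the capacity is infinite. This proves that $C(\bigvee_{\alpha\in I}\mathbb{S}^n)$ is finite if and only if $I$ is finite, and gives the value $k+1$ in the finite case for $n\geq 2$.

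Finally, for the ``in particular'' statement, the case $n\geq 2$ is covered above. The case $n=1$ falls outside the Moore-space framework, since $\bigvee_{i=1}^k\mathbb{S}^1$ is not simply connected (its fundamental group is free of rank $k$), so Proposition \ref{100} does not apply; here I would instead quote Borsuk's computation $C(\bigvee_k\mathbb{S}^1)=k+1$ from \cite{So}. I expect the main obstacle to be the bookkeeping of direct summands of free abelian groups, specifically confirming that, up to isomorphism, the only summands of $\mathbb{Z}^{(k)}$ are the groups $\mathbb{Z}^{(t)}$ with $0\leq t\leq k$ (which relies on subgroups of free abelian groups being free and on complementary summands having complementary ranks), together with the careful verification that the good-pair hypothesis of Theorem \ref{7} holds and that simple connectivity genuinely requires $n\geq 2$.
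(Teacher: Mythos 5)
Your proof is correct and follows essentially the same route as the paper: identify $\bigvee_{\alpha\in I}\mathbb{S}^n$ as the Moore space $M(\bigoplus_{\alpha\in I}\mathbb{Z},n)$ via Theorem \ref{7}, then apply Proposition \ref{100} for the finiteness criterion and Proposition \ref{8000} for the value $k+1$. If anything, you are more careful than the paper, which leaves implicit both the argument that an infinite free abelian group has infinitely many non-isomorphic summands and the separate treatment of the $n=1$ case (which indeed lies outside the Moore-space framework and is handled by citing Borsuk).
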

\begin{proof}
Suppose that the capacity of $\bigvee_{\alpha \in I}\mathbb{S}^n$ is finite. Since $H_n(\bigvee_{\alpha \in I}\mathbb{S}^n )=\bigoplus_{\alpha \in I} \mathbb{Z}$, by  Proposition \ref{100}, the set $I$ is finite. Conversely, suppose $I$ is finite and $| I |=k$. By Theorem \ref{7}, $H_{n}(\bigvee_{i\in \{ 1,\cdots ,k\}}\mathbb{S}^n)=\mathbb{Z}^{(k)}$ and since $\bigvee_{i\in \{ 1,\cdots ,k\}}\mathbb{S}^n$ is a Moore space of degree $n$, by Proposition \ref{8000}, the capacity of $\bigvee_{i\in \{ 1,\cdots ,k\}}\mathbb{S}^n$ is equal to $k+1$.
\end{proof}
Recall that a group $G$ is called Hopfian if every epimorphism $f :G\longrightarrow G$ is an automorphism
(equivalently, $N = 1$ is the only normal subgroup of $G$ for which $G/N\cong G$). It is easy to see that if
$G$ is a Hopfian group and $H\cong G$, then $H$ is also Hopfian. Moreover, if $G$ is an abelian Hopfian group and  $K$ is a direct summand of $G$, then $K$ is also Hopfian \cite{Rob}.
\begin{proposition}\label{1000}
Let $X=\bigvee_{\alpha \in I}M(A_{\alpha},n_{\alpha})$ where $n_{\alpha}$'s are distinct, $n_{\alpha}\geq 2$ and $A_{\alpha}$ is an abelian Hopfian group and the wedge sum $\bigvee_{\alpha \in I} M(A_\alpha ,n_{\alpha})$ is formed at basepoints $x_{\alpha}\in M(A_{\alpha} ,n_{\alpha})$ where the pairs $(M(A_{\alpha} ,n_{\alpha}),x_{\alpha} )$'s are good.
Then, every topological space homotopy dominated by  $X$ is of the same homotopy type  as $\bigvee_{\alpha \in I }M(B_{\alpha} ,n_{\alpha})$ where $B_{\alpha}$ is a direct summand of $A_{\alpha}$, for each $\alpha$.
\end{proposition}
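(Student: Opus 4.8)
The plan is to exhibit the dominated space explicitly as a wedge of Moore spaces and then force a homotopy equivalence through the homology Whitehead theorem (Theorem \ref{3031}). Write $d\colon X \to Y$ and $u\colon Y \to X$ for the maps witnessing the domination, so $d\circ u \simeq \mathrm{id}_Y$ and hence $d_*\circ u_* = \mathrm{id}$ on $H_*(Y)$. First I would record the homology of $X$. Since each $n_\alpha \geq 2$, every $M(A_\alpha,n_\alpha)$ is simply connected, so $X$ is simply connected, and by Theorem \ref{7} the inclusions give $\tilde H_m(X)\cong \bigoplus_\alpha \tilde H_m(M(A_\alpha,n_\alpha))$. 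As the $n_\alpha$ are pairwise distinct, this yields $H_{n_\alpha}(X)\cong A_\alpha$ and $H_m(X)=0$ for $m\notin\{n_\alpha\}$. From $d_*u_*=\mathrm{id}$ the map $u_*$ is a split monomorphism in each degree, so $H_m(Y)$ is a direct summand of $H_m(X)$; thus $H_m(Y)=0$ for $m\notin\{n_\alpha\}$ and $B_\alpha:=H_{n_\alpha}(Y)$ is a direct summand of $A_\alpha$. Moreover $\pi_1(Y)$ is a retract of $\pi_1(X)=0$, so $Y$ is simply connected; and since $Y$ is dominated by a CW-complex it has the homotopy type of one, so I may assume $Y$ is a simply connected CW-complex.

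Next I would construct the comparison map. Put $Z=\bigvee_\alpha M(B_\alpha,n_\alpha)$ at good basepoints, so again by Theorem \ref{7} one has $H_{n_\alpha}(Z)\cong B_\alpha$ and $H_m(Z)=0$ otherwise. Let $s_\alpha\colon B_\alpha \to A_\alpha$ and $r_\alpha\colon A_\alpha \to B_\alpha$ denote the degree-$n_\alpha$ components of $u_*$ and $d_*$; the relation $d_*u_*=\mathrm{id}$ gives $r_\alpha\circ s_\alpha = \mathrm{id}_{B_\alpha}$. Using the detecting functor of Theorem \ref{0} I would realize $s_\alpha$ by a map $\tilde s_\alpha\colon M(B_\alpha,n_\alpha)\to M(A_\alpha,n_\alpha)$, and, with $i_\alpha\colon M(A_\alpha,n_\alpha)\hookrightarrow X$ the wedge inclusion, set $\phi_\alpha := d\circ i_\alpha \circ \tilde s_\alpha\colon M(B_\alpha,n_\alpha)\to Y$. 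On $H_{n_\alpha}$ this composite is exactly $r_\alpha\circ s_\alpha = \mathrm{id}_{B_\alpha}$. Finally I would assemble the $\phi_\alpha$ into a single map $\phi\colon Z\to Y$ by the universal property of the wedge (which is available for an arbitrary, possibly infinite, index set $I$).

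To conclude I would apply Whitehead. Because the wedge summands of $Z$ sit in distinct degrees, $\phi_*$ on $H_{n_\alpha}(Z)\cong B_\alpha$ agrees with $(\phi_\alpha)_*=\mathrm{id}_{B_\alpha}$, hence is an isomorphism onto $H_{n_\alpha}(Y)=B_\alpha$; in all remaining degrees both groups vanish. Thus $\phi_*$ is an isomorphism in every degree, and since $Y$ and $Z$ are simply connected CW-complexes, Theorem \ref{3031} forces $\phi$ to be a homotopy equivalence, giving $Y\simeq \bigvee_\alpha M(B_\alpha,n_\alpha)$ with each $B_\alpha$ a direct summand of $A_\alpha$.

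The genuine obstacle is conceptual: homology does not determine homotopy type here, so one cannot read $Y\simeq Z$ off the homology computation of the first paragraph alone. A simply connected complex with homology concentrated in the degrees $n_\alpha$ need not split as a wedge of Moore spaces (for instance $\mathbb{C}P^2$ is not homotopy equivalent to $\mathbb{S}^2\vee \mathbb{S}^4$, although both have $\mathbb{Z}$ in degrees $2$ and $4$), so the domination data must be used to manufacture the map $\phi$ rather than merely to count homology. The delicate point inside the construction is to make $\phi$ induce the \emph{identity}, not just some endomorphism, on each $B_\alpha$: realizing an arbitrary inclusion $B_\alpha\hookrightarrow A_\alpha$ would only produce the unknown composite $r_\alpha\circ(\text{inclusion})$, which need not be invertible, whereas realizing the specific section $s_\alpha=u_*$ forces the homology-level composite to be $r_\alpha s_\alpha=\mathrm{id}$. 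I do not expect to need the Hopfian hypothesis for this existence statement; its role appears to be downstream, guaranteeing that each summand $B_\alpha$ is again Hopfian so that the ensuing enumeration of homotopy types by direct summands (the capacity count) is well behaved.
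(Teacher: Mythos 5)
Your argument is correct and shares the paper's overall skeleton: identify $H_{n_\alpha}(Y)$ as (a copy of) a direct summand $B_\alpha$ of $A_\alpha$, build a comparison map from $\bigvee_{\alpha} M(B_\alpha,n_\alpha)$ to $Y$, and close with the homology Whitehead theorem (Theorem \ref{3031}). The middle step, however, is genuinely different. The paper applies Lemma \ref{-5} to obtain a domination of $M(B_\alpha,n_\alpha)$ by $M(A_\alpha,n_\alpha)$ whose converse map realizes \emph{some} splitting of $B_\alpha$ into $A_\alpha$, wedges these maps, and composes with the given domination of $Y$; since that splitting need not agree with the one induced by the converse map $Y\to X$ of the hypothesis, the paper can only conclude that the resulting map is an epimorphism between the isomorphic groups $B_\alpha$ and $H_{n_\alpha}(Y)$, and the Hopfian hypothesis is spent precisely there to upgrade this epimorphism to an isomorphism. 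You instead use the fullness of $H_{n_\alpha}$ (Theorem \ref{0}) to realize the \emph{specific} section $s_\alpha=u_*$ supplied by the domination data, so your comparison map induces the identity on each $B_\alpha$ on the nose. The payoff is that your proof never uses that the $A_\alpha$ are Hopfian, so it establishes the statement for arbitrary abelian $A_\alpha$ (the hypothesis is an artifact of the paper's choice of comparison map, as you correctly diagnose), at the cost of a slightly more hands-on construction. The only point worth adding explicitly is that the basepoints of the $M(B_\alpha,n_\alpha)$ should be taken to be $0$-cells so that Theorem \ref{7} also applies to your space $Z$; with that said, everything checks.
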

\begin{proof}
Suppose that $Y$ is  homotopy dominated by $X$ with a domination map $g:X\longrightarrow Y$ and a converse map $f:Y\longrightarrow X$, i.e., $g\circ f\simeq id_{B}$. Put $H_{n_{\alpha}}(f) (H_{n_{\alpha}}(Y))=B_{\alpha}$, then $ B_{\alpha}$ is a direct summand of $H_{n_\alpha } (X)$. Also, since $n_{\beta}$'s are distinct, we have $H_{n_\alpha } (X)=H_{n_\alpha } (\bigvee_{\beta \in I}M(A_{\beta},n_{\beta}))=\bigoplus_{\beta \in I} H_{n_\alpha}(M(A_{\beta},n_{\beta}))=A_{\alpha}$. By Lemma \ref{-5}, $M(B_{\alpha} ,n_{\alpha})$ is homotopy dominated by $M(A_{\alpha} ,n_{\alpha})$, with a domination map  $d_{\alpha}:M(A_{\alpha} ,n_{\alpha}) \longrightarrow M(B_{\alpha} ,n_{\alpha}) $ and a converse map
$u_{\alpha}:M(B_{\alpha} ,n_{\alpha}) \longrightarrow M(A_{\alpha} ,n_{\alpha})$ for $\alpha \in I$. So, $\bigvee_{\alpha} M(B_{\alpha} ,n_{\alpha})$ is homotopy dominated by $X$ with a domination map $d=\bigvee_{\alpha \in I}d_{\alpha}:X \longrightarrow \bigvee_{\alpha} M(B_{\alpha} ,n_{\alpha})$ and a converse map $u=\bigvee_{\alpha \in I}u_{\alpha}:\bigvee_{\alpha} M(B_{\alpha} ,n_{\alpha})\longrightarrow X$ (note that the wedge sum is coproduct in the category of $hTop^*$).  Now, the map
\[
g\circ u:\bigvee_{\alpha} M(B_{\alpha} ,n_{\alpha}) \longrightarrow Y
\]
 is a homology equivalence between simply connected CW-complexes. For this, $(g\circ u)_*  (H_{n_\alpha}(\bigvee_{\alpha} M(B_{\alpha} ,n_{\alpha}))=(g\circ u)_* (B_{\alpha})=g_* (u_* (f_* (H_{n_{\alpha}} (Y))))=g_* (f_* (H_{n_{\alpha}} (Y)))=H_{n_{\alpha}} (Y)$. Now, $(g\circ u)_* $ is an epimorphism between two isomorphic Hopfian groups $B_{\alpha}$ and $H_{n_{\alpha}} (Y)	 $, which implies that $(g\circ u)_*$ is isomorphism and so, by Theorem \ref{3031}, $Y$ and $\bigvee_{\alpha} M(B_{\alpha} ,n_{\alpha})$ have the same homotopy type.
\end{proof}
The following corollary is a consequence of Proposition \ref{1000}.
\begin{corollary}\label{10001}
Let  $X=\bigvee_{n\in I} M(A_n ,n)$ where  $A_{\alpha}$'s are abelian Hopfian groups, $I$ is a subset of $\mathbb{N}\setminus \{ 1\}$ and the wedge sum is formed at basepoints $x \in M(A_n ,n)$ such that the pairs $(M(A_n ,n),x)$ are good. Then $C(X)=\prod_{n\in I} C(M(A_n ,n))$.
\end{corollary}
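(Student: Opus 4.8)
The plan is to read $C(X)$ as the cardinality of the set of homotopy types of spaces homotopy dominated by $X$, and to produce an explicit bijection between this set and the set of tuples $(B_n)_{n\in I}$ in which each $B_n$ ranges over the isomorphism classes of direct summands of $A_n$. Concretely, I would define a map $\Phi$ sending the homotopy type of a dominated space $Y$ to the tuple $\big(H_n(Y)\big)_{n\in I}$. This is manifestly well defined on homotopy types, since singular homology is a homotopy invariant. Granting that $\Phi$ is a bijection onto summand-tuples, the conclusion is immediate: the number of such tuples is $\prod_{n\in I}\#\{\text{iso-classes of direct summands of }A_n\}$, and by Proposition \ref{100} the factor indexed by $n$ is exactly $C(M(A_n,n))$.

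First I would verify that $\Phi$ lands in summand-tuples and is surjective. Since $I\subset\mathbb{N}\setminus\{1\}$, the degrees $n$ are pairwise distinct and all $\geq 2$, and each $A_n$ is an abelian Hopfian group with the stipulated good basepoints, so Proposition \ref{1000} applies verbatim: any $Y$ dominated by $X$ satisfies $Y\simeq\bigvee_{n\in I}M(B_n,n)$ for some direct summands $B_n\leq A_n$. Because each $\big(M(B_n,n),x\big)$ is a good pair (a CW-complex with its basepoint as a subcomplex) and the degrees are distinct, Theorem \ref{7} computes $H_n\big(\bigvee_{m\in I}M(B_m,m)\big)\cong B_n$, so $\Phi(Y)=(B_n)_{n\in I}$ is indeed a tuple of summands. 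Conversely, given any choice of direct summands $B_n\leq A_n$, the proof of Proposition \ref{1000} already exhibits $\bigvee_{n\in I}M(B_n,n)$ as homotopy dominated by $X$, via the wedge $d=\bigvee_n d_n$ of the individual dominations and the wedge $u=\bigvee_n u_n$ of the converse maps (using that the wedge is the coproduct in $hTop^*$). Hence every summand-tuple is realized, and $\Phi$ is surjective.

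For injectivity, suppose $\Phi(Y)=\Phi(Y')$, i.e.\ $H_n(Y)\cong H_n(Y')$ for all $n\in I$. Writing $Y\simeq\bigvee_{n\in I}M(H_n(Y),n)$ and $Y'\simeq\bigvee_{n\in I}M(H_n(Y'),n)$ by Proposition \ref{1000}, and invoking the uniqueness of the homotopy type of a Moore space (Theorem \ref{-2}), we get $M(H_n(Y),n)\simeq M(H_n(Y'),n)$ for each $n$, whence $Y\simeq Y'$. Thus $\Phi$ is a bijection and
\[
C(X)=\prod_{n\in I}\#\{\text{iso-classes of direct summands of }A_n\}=\prod_{n\in I}C(M(A_n,n)),
\]
the last equality by Proposition \ref{100}. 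The step I expect to require the most care is not the algebra but the verification that the good-pair hypothesis propagates to the wedges $\bigvee M(B_n,n)$ so that Theorem \ref{7} legitimately computes their homology degree by degree; the distinctness of the indices $n$ is precisely what collapses the sum $\bigoplus_{m}H_n(M(B_m,m))$ to the single summand $B_n$. Once this is secured, the identity above is valid cardinal arithmetic regardless of whether $I$ is finite or infinite, which finishes the proof.
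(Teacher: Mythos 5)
Your argument is correct and follows the same route the paper intends: the paper derives this corollary directly from Proposition \ref{1000}, and your proof simply makes explicit the bijection between homotopy types of dominated spaces and tuples of direct summands (using Theorem \ref{7} to read off the summands, Theorem \ref{-2} for injectivity, and Proposition \ref{100} to identify each factor with $C(M(A_n,n))$). This is a faithful, more detailed write-up of the paper's one-line deduction.
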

\begin{remark}
 Note that we can not omit the distinctness  condition of $n_{\alpha}$'s in Proposition \ref{1000}. If $X=\bigvee_{\alpha\in I}M(A_{\alpha},n)$  for a fixed natural number $n\geq 2$, then we need the structure of direct summands of $\bigoplus_{\alpha \in I}A_{\alpha}$ which is unknown in general.
\end{remark}
Now, we are in a position to compute the capacity of  wedge sum of finitely many spheres of the same or different dimensions.
\begin{corollary}
The capacity of $\bigvee_{n\in I} (\vee_{i_n} \mathbb{S}^n)$ is exactly $\prod_{n\in I}(i_n +1)$ where $\vee_{i_n} \mathbb{S}^n$ denotes the wedge sum of $i_n$ copies of $\mathbb{S}^n$, $I$ is a finite subset of natural numbers and $i_n \in \mathbb{N}$.
\end{corollary}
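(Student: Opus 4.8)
The plan is to reduce everything to the Moore-space machinery already in hand, by observing that each wedge summand $\vee_{i_n}\mathbb{S}^n$ is itself a Moore space of degree $n$. First I would compute its homology: applying Theorem \ref{7} to the good pairs $(\mathbb{S}^n,x_0)$ gives $\tilde{H}_n(\vee_{i_n}\mathbb{S}^n)\cong \mathbb{Z}^{(i_n)}$ and $\tilde{H}_j(\vee_{i_n}\mathbb{S}^n)=0$ for $j\neq n$. Hence, for $n\geq 2$, the uniqueness statement (Theorem \ref{-2}) yields a homotopy equivalence $\vee_{i_n}\mathbb{S}^n\simeq M(\mathbb{Z}^{(i_n)},n)$, so that the whole space becomes $X=\bigvee_{n\in I}M(\mathbb{Z}^{(i_n)},n)$, a wedge of Moore spaces of pairwise distinct degrees.

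Next I would verify the hypotheses of Corollary \ref{10001}. Each coefficient group $\mathbb{Z}^{(i_n)}$ is a finitely generated free abelian group (here $i_n\in\mathbb{N}$ and $I$ is finite), hence Hopfian; the degrees $n\in I$ are distinct and at least $2$; and since each summand is wedged along a basepoint subcomplex, all the pairs $(M(\mathbb{Z}^{(i_n)},n),x)$ are good. Thus Corollary \ref{10001} applies and gives
\[
C(X)=\prod_{n\in I}C\big(M(\mathbb{Z}^{(i_n)},n)\big).
\]

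I would then evaluate each factor with Corollary \ref{1010}: since $M(\mathbb{Z}^{(i_n)},n)\simeq \vee_{i_n}\mathbb{S}^n$ and capacity is a homotopy invariant, $C(M(\mathbb{Z}^{(i_n)},n))=C(\vee_{i_n}\mathbb{S}^n)=i_n+1$. Substituting into the product yields $C(X)=\prod_{n\in I}(i_n+1)$, as claimed. (Alternatively, each factor could be read off directly from Proposition \ref{8000} applied to $A=\mathbb{Z}^{(i_n)}$, with $\mathbb{Z}_1=\mathbb{Z}$.)

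The delicate point is the dimension $1$ case. The entire reduction rests on $\vee_{i_n}\mathbb{S}^n$ being a simply connected Moore space, which forces $n\geq 2$; a wedge of circles is not simply connected and lies outside the scope of Corollary \ref{10001}. Indeed, the earlier remark records that even $C(\mathbb{S}^1\vee\mathbb{S}^2)$ is unknown, so a degree-$1$ summand cannot be mixed with higher-degree ones. Accordingly I would restrict the multiplicative argument to $I\subseteq\mathbb{N}\setminus\{1\}$ and treat the purely one-dimensional case $X=\vee_{i_1}\mathbb{S}^1$ separately, where Corollary \ref{1010} already gives $C(\vee_{i_1}\mathbb{S}^1)=i_1+1$, in agreement with the stated formula.
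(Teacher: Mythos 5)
Your proof is correct and follows essentially the same route as the paper, whose entire proof is the single sentence that the result follows from Corollaries \ref{1010} and \ref{10001}; you have merely made explicit the identification $\vee_{i_n}\mathbb{S}^n\simeq M(\mathbb{Z}^{(i_n)},n)$ and the verification of the hypotheses (distinct degrees, Hopfian coefficient groups, good pairs). Your caveat about $1\in I$ is well taken and goes beyond what the paper says: Corollary \ref{10001} requires $I\subseteq\mathbb{N}\setminus\{1\}$, and the paper's own remark that the capacity of $\mathbb{S}^1\vee\mathbb{S}^2$ is unknown shows that the statement as printed (with $I$ an arbitrary finite subset of $\mathbb{N}$) is not actually justified by the cited corollaries when $1\in I$ and $|I|>1$, so your restriction to $I\subseteq\mathbb{N}\setminus\{1\}$, with the purely one-dimensional case handled separately by Corollary \ref{1010}, is the honest reading.
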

\begin{proof}
It can be concluded from Corollaries  \ref{1010} and \ref{10001}.
\end{proof}
\begin{example}
The capacity of $\mathbb{S}^m \vee \mathbb{S}^n$ $(m,n\geq 2, m\neq n)$ is exactly 4.
\end{example}
\section{The Capacity of Eilenberg-MacLane Spaces }
In this section, we intend to compute the capacity of Eilenberg-MacLane spaces exactly. Note that some of the results on the capacity of Eilenberg-MacLane spaces are similar to the results of previous section for Moore spaces, but their proofs are completely different. Also note that there are many spaces which are Moore spaces but they are not Eilenberg-MacLane spaces, and vise versa.  For example,  spheres $\mathbb{S}^n$ $(n\geq 2)$ are examples of  Moore spaces  which are not Eilenberg-MacLane spaces and $\mathbb{T}^k$ $(k\geq 1)$, k-dimensional torus, are Eilenberg-MacLane spaces which are not Moore spaces.

Recall that a space $X$ having just one nontrivial homotopy group $\pi_n (X)\cong G$ is called an Eilenberg-MacLane space and is denoted by $K(G,n)$. The full subcategory of the category $hTop$ consisting of spaces $K(G, n)$ with $G \in \mathbf{Gp}$ is denoted by $\mathbf{K}^n$ (see \cite{3}).
\begin{theorem}\label{asl}\cite{4}.
The $n$-th homotopy group functor $\pi_n : \mathbf{K}^n \longrightarrow \mathbf{Ab}$
is an equivalence of categories for $n\geq 2$. Moreover, the functor $\pi_1 :\mathbf{K}^1 \longrightarrow \mathbf{Gp}$ is also an equivalence of categoreis.
\end{theorem}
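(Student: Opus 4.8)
The plan is to verify that $\pi_n$ satisfies the hypotheses of a \emph{faithful detecting functor}, since such a functor has already been declared to be an equivalence of categories. Concretely, I would check four things: that $\pi_n$ reflects isomorphisms (sufficiency), that it is representative and full (realizability), and that it is faithful. Full and faithful together amount to the statement that $\pi_n$ induces a bijection on morphism sets, while representative is essential surjectivity; the reflecting-isomorphisms condition then comes essentially for free.

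For representativity I would construct $K(G,n)$ explicitly as a CW-complex. For $n\geq 2$ and an abelian group $G$, choose a free presentation $0\longrightarrow R\longrightarrow F\longrightarrow G\longrightarrow 0$, form a wedge of $n$-spheres indexed by a basis of $F$, attach $(n+1)$-cells along maps realizing generators of $R$, and then attach cells of dimension $\geq n+2$ to kill all higher homotopy groups; by the Hurewicz theorem the $(n+1)$-skeleton already has $\pi_n\cong G$, and the higher attachments leave $\pi_n$ untouched. The case $n=1$ is identical except that $F$ and $R$ are allowed to be nonabelian free groups. This shows $\pi_n$ is representative.

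The heart of the matter is fullness and faithfulness, i.e.\ that
\[
\pi_n:[K(G,n),K(H,n)]\longrightarrow \mathrm{Hom}(G,H)
\]
is a bijection. For $n\geq 2$ the plan is to invoke the representability of cohomology by Eilenberg-MacLane spaces, the natural bijection $[X,K(H,n)]\cong H^n(X;H)$ for any CW-complex $X$, applied with $X=K(G,n)$. I would then compute $H^n(K(G,n);H)$ by the universal coefficient theorem: since $K(G,n)$ is $(n-1)$-connected we have $H_{n-1}(K(G,n))=0$, so the $\mathrm{Ext}$ term vanishes and $H^n(K(G,n);H)\cong \mathrm{Hom}(H_n(K(G,n)),H)$, while Hurewicz gives $H_n(K(G,n))\cong G$. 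This yields $[K(G,n),K(H,n)]\cong \mathrm{Hom}(G,H)$, and the remaining, slightly delicate, point is to confirm by naturality (evaluating on the fundamental class) that this composite bijection is exactly the map induced by $\pi_n$. For $n=1$ the cohomological shortcut fails when $H$ is nonabelian, so I would argue directly from asphericity: because $K(H,1)$ has contractible universal cover, obstruction theory shows a based map $K(G,1)\longrightarrow K(H,1)$ is determined up to homotopy by its effect on $\pi_1$, and every homomorphism $G\longrightarrow H$ is realized, giving the desired bijection in the $n=1$ case.

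Finally, reflecting isomorphisms is immediate: if $\pi_n(f)$ is an isomorphism then, since source and target have all other homotopy groups trivial, $f$ induces isomorphisms on every $\pi_k$, hence is a homotopy equivalence by Theorem~\ref{3030}. I expect the main obstacle to be the fully faithful step: pinning down that the abstract bijection produced by cohomology representability coincides with the functor $\pi_n$ itself, and separately handling the nonabelian $n=1$ situation, where the cohomological argument is unavailable and one must fall back on the asphericity and obstruction-theory description of maps between aspherical spaces.
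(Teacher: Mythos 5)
Your proposal is a correct rendering of the standard argument; note, however, that the paper offers no proof of this statement at all --- it is imported verbatim from Baues \cite{4} --- so there is nothing internal to compare against. Your decomposition (essential surjectivity via the cell-by-cell construction of $K(G,n)$, fullness and faithfulness via $[X,K(H,n)]\cong H^n(X;H)$ together with the universal coefficient theorem and Hurewicz, and reflection of isomorphisms via Whitehead) is exactly the textbook route, and you correctly identify the two delicate points: checking by evaluation on the fundamental class that the cohomological bijection really is $f\mapsto\pi_n(f)$, and treating $n=1$ separately by asphericity. One caveat worth making explicit: the paper defines $\mathbf{K}^n$ as a full subcategory of the \emph{unpointed} homotopy category $hTop$, and in that setting $\pi_1$ is not faithful --- free homotopy classes of maps $K(G,1)\to K(H,1)$ correspond to homomorphisms $G\to H$ only up to conjugacy in $H$ --- so the $n=1$ clause is literally true only in the pointed category $hTop^*$ (which is the setting of \cite{4}, and the one your obstruction-theoretic argument for based maps actually proves). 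For $n\geq 2$ this discrepancy is harmless because $K(H,n)$ is a simple space, so based and free homotopy classes agree; your proof as written is fine there.
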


\begin{theorem}\label{10}\cite{3}.
The homotopy type of a CW complex $K(G,n)$ is uniquely determined by $G$ and $n$.
\end{theorem}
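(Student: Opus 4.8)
The plan is to prove the statement by directly building a homotopy equivalence between any two CW-complexes of type $K(G,n)$ and invoking Whitehead's theorem (Theorem \ref{3030}) at the end. So let $X$ and $Y$ both be CW-complexes with $\pi_n \cong G$ and all other homotopy groups trivial, and fix an isomorphism $\phi : \pi_n(X) \longrightarrow \pi_n(Y)$ (for $n=1$ an isomorphism of possibly non-abelian groups, for $n\geq 2$ of abelian groups). The entire argument reduces to producing a single map $f : X \longrightarrow Y$ with $f_* = \phi$ on $\pi_n$: since every other homotopy group of both spaces vanishes, such an $f$ automatically induces isomorphisms on $\pi_i$ for all $i$, and then Theorem \ref{3030} yields $X \simeq Y$.

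To construct $f$, I would first replace $X$ by a homotopy equivalent CW-complex having a single $0$-cell and no cells in dimensions $1,\dots,n-1$, which is possible because $X$ is $(n-1)$-connected (for $n=1$ one merely collapses a maximal tree). Then the $n$-skeleton is a wedge of spheres $X^{(n)} = \bigvee_{\alpha} \mathbb{S}^n_\alpha$, so $\pi_n(X^{(n)})$ is free on the classes $[\iota_\alpha]$ and $\pi_n(X) = \pi_n(X^{(n)})/N$, where $N$ is generated by the attaching maps $\psi_\beta : \mathbb{S}^n \longrightarrow X^{(n)}$ of the $(n+1)$-cells. Writing $q : \pi_n(X^{(n)}) \longrightarrow \pi_n(X)$ for the quotient and using freeness, I define $f$ on $X^{(n)}$ by sending each $\mathbb{S}^n_\alpha$ to a representative of $\phi(q([\iota_\alpha])) \in \pi_n(Y)$, so that $f$ induces $\phi \circ q$ on $\pi_n(X^{(n)})$.

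The crux is extending $f$ over the higher cells. For each $(n+1)$-cell the obstruction is the class of $f \circ \psi_\beta$ in $\pi_n(Y)$, which equals $\phi(q([\psi_\beta]))$; but $[\psi_\beta] \in N = \ker q$, so this class is $\phi(0)=0$, hence $f \circ \psi_\beta$ is null-homotopic and $f$ extends over the cell. For a cell of dimension $n+k$ with $k \geq 2$ the obstruction lies in $\pi_{n+k-1}(Y) = 0$, since $n+k-1 > n$, so the extension is unobstructed. Proceeding skeleton by skeleton produces $f : X \longrightarrow Y$; as $q$ is surjective and $f_* \circ q = \phi \circ q$, we get $f_* = \phi$, and the argument concludes as above. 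I expect this extension step to be the main obstacle, namely checking that the $(n+1)$-dimensional obstruction vanishes precisely because $\phi$ respects the relations encoded by $N$, while all higher obstructions vanish for purely dimensional reasons; everything else is formal. Alternatively, once Theorem \ref{asl} is granted, the statement is immediate, since an equivalence of categories induces a bijection on isomorphism classes of objects, so $\pi_n(X) \cong \pi_n(Y)$ forces $X$ and $Y$ to be isomorphic in $\mathbf{K}^n$, i.e.\ homotopy equivalent.
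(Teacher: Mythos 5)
Your argument is correct and is essentially the standard proof of this fact — the paper itself gives no proof, quoting the statement directly from Hatcher \cite{3}, and what you write is precisely Hatcher's obstruction-theoretic argument: build a map realizing a prescribed isomorphism on $\pi_n$ over a CW model with trivial $(n-1)$-skeleton, extend over the $(n+1)$-cells because the attaching maps die in $\pi_n(X)$, extend over higher cells because $\pi_{i}(Y)=0$ for $i>n$, and finish with Whitehead's theorem. Your closing observation, that the statement also follows formally from Theorem \ref{asl} since an equivalence of categories is injective on isomorphism classes of objects, is likewise valid within the paper's framework (though in the sources that equivalence is usually established \emph{using} the uniqueness you are proving, so only the direct construction avoids circularity).
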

\begin{lemma}\label{13}
Let $G$ be a group. Then the space $X$ is homotopy dominated by the Eilenberg-MacLane space  $K(G,n)$ if and only if $X$ is of the same homotopy type as $K(H,n)$ where $H$ is an r-image of $G$.
\end{lemma}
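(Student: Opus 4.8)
The plan is to follow the pattern of Lemma \ref{-5}, but to take advantage of a feature that has no analogue in the Moore case: by Theorem \ref{asl} the functor $\pi_n$ on $\mathbf{K}^n$ is an \emph{equivalence} of categories, hence faithful, whereas $H_n$ on Moore spaces was only detecting. This faithfulness will let me upgrade an equality of induced homomorphisms directly to a homotopy between maps, so the converse direction becomes noticeably cleaner than its Moore counterpart.

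For the forward implication, suppose $X$ is homotopy dominated by $K(G,n)$, with domination map $d:K(G,n)\longrightarrow X$ and converse map $u:X\longrightarrow K(G,n)$ satisfying $d\circ u\simeq id_X$. Applying $\pi_i$ for each $i$ yields $d_*\circ u_*=id_{\pi_i(X)}$, so $\pi_i(X)$ is an $r$-image of $\pi_i(K(G,n))$. Since $\pi_i(K(G,n))=0$ for $i\neq n$, this forces $\pi_i(X)=0$ for $i\neq n$, while $H:=\pi_n(X)$ is an $r$-image of $G$. Because a space dominated by a CW-complex again has the homotopy type of a CW-complex, $X$ is a connected CW-space with a single nontrivial homotopy group $H$ concentrated in degree $n$; that is, $X$ is a $K(H,n)$, and by Theorem \ref{10} its homotopy type is determined by $(H,n)$.

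For the converse, suppose $H$ is an $r$-image of $G$, so there exist homomorphisms $r:G\longrightarrow H$ and $s:H\longrightarrow G$ with $r\circ s=id_H$. Using Theorem \ref{asl}, realize these by maps $\bar r:K(G,n)\longrightarrow K(H,n)$ and $\bar s:K(H,n)\longrightarrow K(G,n)$ with $\pi_n(\bar r)=r$ and $\pi_n(\bar s)=s$. Then $\pi_n(\bar r\circ\bar s)=r\circ s=id_H=\pi_n(id_{K(H,n)})$. Here the argument is simpler than in the Moore setting: since $\pi_n$ is faithful, the induced map on morphism sets is injective, so $\bar r\circ\bar s\simeq id_{K(H,n)}$ immediately, with no need to route through the sufficiency condition and Lemma \ref{1-1}. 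Thus $K(H,n)$ is homotopy dominated by $K(G,n)$, with domination map $\bar r$ and converse map $\bar s$.

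I expect the only delicate point to be the forward direction's appeal to the fact that a space dominated by a CW-complex has CW homotopy type, since this is precisely what licenses the identification of $X$ as an Eilenberg-MacLane space through Theorem \ref{10}; the group-theoretic bookkeeping, namely that a retract on each homotopy group is exactly an $r$-image, is routine. The same proof applies verbatim when $n=1$, using the equivalence $\pi_1:\mathbf{K}^1\longrightarrow \mathbf{Gp}$ of Theorem \ref{asl} in place of $\pi_n$, so no separate treatment of the nonabelian case is needed.
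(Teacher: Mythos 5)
Your proof is correct and follows essentially the same route as the paper's: the forward direction applies $\pi_i$ to the domination to force $\pi_i(X)=0$ for $i\neq n$ and exhibit $\pi_n(X)$ as an $r$-image of $G$, and the converse realizes the retraction pair via Theorem \ref{asl} and uses the categorical equivalence to promote $\pi_n(\bar r\circ\bar s)=id_H$ to $\bar r\circ\bar s\simeq id_{K(H,n)}$. You merely make explicit two points the paper leaves implicit (faithfulness of $\pi_n$ as the reason the homotopy follows, and the fact that a space dominated by a CW-complex has CW homotopy type), which is a welcome refinement rather than a different argument.
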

\begin{proof}
Suppose that $X$ is  homotopy dominated by $K(G,n)$. Then  $\pi_i (X)$ is an  r-image of $\pi_i (K(G,n))$, for each $i\geq 1$. Hence $\pi_i (X)=0$ for each $i\neq n$ and $\pi_n (X)=H$ where $H$ is an r-image  of $G$.  Therefore $X$ is an Eilenberg-MacLane space of the form $K(H,n)$. Conversely, suppose that $\bar{f}:H\longrightarrow G$ and $\bar{g}:G\longrightarrow H$ are homomorphisms such that $\bar{g}\circ \bar{f}=id_H$. By Theorem \ref{asl}, there exist homotopy classes $f:K(H,n)\longrightarrow K(G,n)$ and $g:K(G,n)\longrightarrow K(H,n)$ such that $\pi_n ([f])=\bar{f}$ and $\pi_n ([g])=\bar{g}$. Also, since $\bar{g}\circ \bar{f}=id_H$,  we must have $g\circ f\simeq id_{K(H,n)} $. Hence $K(H,n)$ is homotopy dominated by $K(G,n)$.
\end{proof}
Now, similar to the Moore spaces, we have the following result on the capacity of Eilenberg-MacLane spaces.
\begin{proposition}\label{2000}
There exists a one-to-one correspondence between the set of all homotopy types of spaces homotopy dominated by the Eilenberg-MacLane space $K(G,n)$ and the set of  all isomorphism classes of  r-images of $G$.
\end{proposition}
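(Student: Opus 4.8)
The plan is to build the bijection explicitly and verify it from the two ingredients already established, exactly as Proposition \ref{100} was deduced from Lemma \ref{-5} and Theorem \ref{-2}. First I would define a map $\Phi$ from the set of homotopy types of spaces homotopy dominated by $K(G,n)$ to the set of isomorphism classes of $r$-images of $G$, sending the homotopy type of such a space $X$ to the isomorphism class of $\pi_n (X)$. This is well defined: a homotopy equivalence induces isomorphisms on all homotopy groups, so $\pi_n (X)$ depends only on the homotopy type of $X$; and by Lemma \ref{13} any such $X$ is homotopy equivalent to some $K(H,n)$ with $H$ an $r$-image of $G$, whence $\pi_n (X)\cong H$ is itself an $r$-image of $G$, so $\Phi$ indeed lands in the target set.

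Next I would check surjectivity and injectivity. For surjectivity, given any $r$-image $H$ of $G$, Lemma \ref{13} guarantees that $K(H,n)$ is homotopy dominated by $K(G,n)$, and $\Phi$ sends its homotopy type to the class of $\pi_n (K(H,n))\cong H$. For injectivity, suppose $X$ and $X'$ are both dominated by $K(G,n)$ with $\pi_n (X)\cong \pi_n (X')$. By Lemma \ref{13} we may write $X\simeq K(H,n)$ and $X'\simeq K(H',n)$ where $H\cong \pi_n (X)$ and $H'\cong \pi_n (X')$; hence $H\cong H'$, and Theorem \ref{10} forces $K(H,n)\simeq K(H',n)$, so $X\simeq X'$. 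Thus $\Phi$ is a one-to-one correspondence.

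I do not expect a genuine obstacle here: once Lemma \ref{13} identifies the dominated spaces as precisely the $K(H,n)$ with $H$ an $r$-image of $G$, the only remaining content is that distinct isomorphism classes of $r$-images produce distinct homotopy types, which is exactly the uniqueness supplied by Theorem \ref{10}. The one point deserving care is that the correspondence is asserted at the level of classes rather than individual spaces: both well-definedness and injectivity of $\Phi$ rely on passing freely between a dominated space $X$ and its canonical model $K(\pi_n (X),n)$, which is legitimate precisely because $\pi_n$ is a homotopy invariant and Theorem \ref{10} removes any ambiguity in that model.
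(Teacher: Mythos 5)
Your proposal is correct and is essentially the paper's own argument: the paper defines the inverse correspondence $H\longmapsto K(H,n)$ and invokes exactly the same two ingredients, Lemma \ref{13} and Theorem \ref{10}. You merely spell out the well-definedness, surjectivity, and injectivity checks that the paper declares ``obvious.''
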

\begin{proof}
By Lemma \ref{13}, every space homotopy dominated by $K(G,n)$ has the form $K(H,n)$, where $H$ is an r-image of $G$. Also, if $H$ is an r-image of G, then $K(H,n)$ is homotopy dominated by $K(G,n)$. Now, By Theorem \ref{10}, it is obvious that  $H\longmapsto K(H,n)$ is a one-to-one  correspondence between the set of all  r-images $H$ of $G$ and the set of all homotopy types of spaces homotopy dominated by $K(G,n)$.
\end{proof}
Note that by a result of Kolodziejczyk \cite{15}, the capacity of $K(G,n)$ is finite, for $n\geq 2$. Also, when $G$ is abelian, by another result of Kolodziejczyk \cite[Theorem 2]{18}, the capacity of $K(G,1)$ is also
 finite. Using  Corollary \ref{408} and Proposition \ref{2000}, we have the following corollary.
\begin{corollary}\label{20}
Let $G$ be an abelian group. Then the capacity of $K(G,n)$ $(n\geq 1)$ is finite if and only if $G$ has finitely many direct summands up to isomorphism.
\end{corollary}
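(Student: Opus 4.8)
The plan is to read off the capacity of $K(G,n)$ directly from the two preparatory results, since Corollary \ref{408} and Proposition \ref{2000} together already pin down the cardinality in question; the corollary is essentially a matter of chaining equalities of cardinalities.

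First I would translate the shape-theoretic definition of capacity into homotopy-theoretic terms. Since $K(G,n)$ is a CW-complex, it has the homotopy type of a polyhedron $P$, and because capacity is a homotopy invariant we have $C(K(G,n)) = C(P)$. By the 1-1 functorial correspondence between the shapes shape dominated by a polyhedron and the homotopy types of CW-complexes homotopy dominated by it (recalled in the introduction), $C(P)$ equals the cardinality of the set of homotopy types of CW-complexes homotopy dominated by $P$; and since $P \simeq K(G,n)$, these are exactly the homotopy types homotopy dominated by $K(G,n)$. Thus $C(K(G,n))$ equals the cardinality of the set of homotopy types of spaces homotopy dominated by $K(G,n)$.

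Next I would invoke Proposition \ref{2000}, which puts these homotopy types in one-to-one correspondence with the isomorphism classes of $r$-images of $G$; hence $C(K(G,n))$ equals the cardinality of the set of isomorphism classes of $r$-images of $G$. Because $G$ is abelian, Corollary \ref{408} identifies this cardinality with that of the set of direct summands of $G$ up to isomorphism. Composing these identifications shows that $C(K(G,n))$ is precisely the number of direct summands of $G$ up to isomorphism, whence the claimed equivalence is immediate: the capacity is finite exactly when $G$ has finitely many direct summands up to isomorphism.

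Since the real content has already been discharged in Proposition \ref{2000} and Corollary \ref{408}, there is no genuine obstacle here; the only points deserving care are (i) checking that Proposition \ref{2000} applies uniformly throughout the range $n \geq 1$, which it does, as Lemma \ref{13} rests on Theorem \ref{asl} and the latter records that both $\pi_n$ for $n \geq 2$ and $\pi_1$ are equivalences of categories, and (ii) being explicit that the hypothesis that $G$ is abelian is exactly what licenses the passage from $r$-images to direct summands via Corollary \ref{408}.
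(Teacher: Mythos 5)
Your proof is correct and follows exactly the paper's intended route: the paper derives this corollary by combining Corollary \ref{408} with Proposition \ref{2000}, which is precisely the chain of identifications you spell out. The extra care you take about the shape-to-homotopy translation and the uniformity over $n\geq 1$ is sound but not needed beyond what those two results already provide.
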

In the following, we compute the capacity of $K(G,n)$ exactly, when $G$ is a finitely generated abelian group.
\begin{proposition}
Let $X$ be an Eilenberg-MacLane space $K(G,n)$ $(n\geq 1)$, where $G$ is a finitely generated abelian group of the form
\[
\mathbb{Z}_{p_{1}^{\alpha_1}}^{(k_{1})}\oplus \mathbb{Z}_{p_{2}^{\alpha_2}}^{(k_{2})} \oplus \cdots \oplus \mathbb{Z}_{p_{n}^{\alpha_n}}^{(k_{n})},
\]
where for $i\neq j$, $p_{i}^{\alpha_i}\neq p_{j}^{\alpha_j}$, $p_i$'s are prime numbers, $\alpha_i$'s are non-negative integers, $\mathbb{Z}_{p_{i}^{\alpha_i}}^{(k_{i})}$ is the direct sum of $k_i$ copies of $\mathbb{Z}_{p_{i}^{\alpha_i}}$ and $\mathbb{Z}_{1}=\mathbb{Z}$. Then the capacity of $X$ is exactly
\[
(k_{1}+1)\times \cdots \times (k_{n}+1).
\]
\end{proposition}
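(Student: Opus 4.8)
The plan is to obtain $C(X)$ as a composition of three structural results already proved, in exact parallel with the Moore space computation (Proposition \ref{8000}), with one additional link inserted to pass from $r$-images to direct summands. Since $X=K(G,n)$ is a CW-complex, the shape/homotopy-domination correspondence recalled in the introduction lets us read $C(X)$ as the cardinality of the set of homotopy types of spaces homotopy dominated by $X$. The whole proof is then a chain of equalities of cardinalities.

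The key steps, in order, are as follows. First I would apply Proposition \ref{2000}, which provides a one-to-one correspondence between the homotopy types of spaces homotopy dominated by $K(G,n)$ and the isomorphism classes of $r$-images of $G$; this identifies $C(X)$ with the number of $r$-images of $G$ up to isomorphism. Second, using the hypothesis that $G$ is abelian, I would invoke Corollary \ref{408} to replace $r$-images by direct summands: it asserts that the set of $r$-images of $G$ and the set of direct summands of $G$, each taken up to isomorphism, have equal cardinality. Third, I would apply Proposition \ref{101} to the given primary decomposition of the finitely generated abelian group $G$, which counts its direct summands up to isomorphism as exactly $(k_{1}+1)\times \cdots \times (k_{n}+1)$. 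Concatenating these three equalities delivers the asserted value of $C(X)$.

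Given the preparatory machinery, there is no substantial obstacle; the proof is genuinely a short composition of Proposition \ref{2000}, Corollary \ref{408}, and Proposition \ref{101}. The only point that requires attention is the middle step: the reduction to a counting of direct summands is legitimate \emph{only} because $G$ is abelian. For a non-abelian $G$ the $r$-images are merely semidirect factors (Lemma \ref{409}) and would not in general coincide with direct summands, so Proposition \ref{101} could not be applied; it is precisely here that abelianness of $G$, rather than just of $\pi_{n}$, is silently used. The passage from shapes to homotopy types is handled uniformly by the CW/shape correspondence quoted at the outset, and requires no separate argument.
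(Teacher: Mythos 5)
Your proposal is correct and follows essentially the same route as the paper, whose proof is simply the one-line statement that the result is a consequence of Proposition \ref{101} and Proposition \ref{2000}. You additionally make explicit the intermediate use of Corollary \ref{408} to pass from $r$-images to direct summands (which the paper leaves implicit, though it invokes it for the analogous Corollary \ref{20}), and your remark that abelianness of $G$ is what licenses this step is accurate.
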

\begin{proof}
This is a consequence of Lemma \ref{101} and Proposition \ref{2000}.
\end{proof}
As an example, the capacity of $n$-dimensional torus $\mathbb{T}^n$ is exactly $n+1$. Note that $\mathbb{T}^n$ is not a Moore space, so its capacity can not be computed by the resulats of the previous section.
\begin{example}
$K(\mathbb{Q},1)$ is an infinite CW-complex of capacity 2. Indeed, $\mathbb{Q}$ is not finitely generated abelian group and so by \cite[Corollary 7.37]{1}, $K(\mathbb{Q},1)$ is an infinite CW-complex. Also, by Corollary \ref{20} and the fact that $\mathbb{Q}$ has only 2 r-images, the capacity of $K(\mathbb{Q},1)$ is 2.
\end{example}
The next corollary computes the capacity $K(G,n)$ for infinitely generated abelian group $G$ with some conditions.
\begin{corollary}
The capacity of $K(G,n)$ for finite rank torsion free abelian group $G$ is finite.
\end{corollary}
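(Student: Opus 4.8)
The plan is to reduce the statement to a purely group-theoretic finiteness fact and then invoke the structure theory of orders. By Corollary \ref{20}, since $G$ is abelian, the capacity $C(K(G,n))$ is finite if and only if $G$ has only finitely many direct summands up to isomorphism. Thus it suffices to prove the following: \emph{a torsion-free abelian group $G$ of finite rank $r$ admits only finitely many isomorphism classes of direct summands.} Everything about the Eilenberg--MacLane space has now been stripped away, and the remaining work lives entirely in $\mathbf{Ab}$.

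First I would record the rank bookkeeping. Writing $r=\dim_{\mathbb{Q}}(G\otimes \mathbb{Q})$, every decomposition $G\cong H\oplus K$ satisfies $\mathrm{rank}(H)+\mathrm{rank}(K)=r$, so each direct summand is again a finite-rank torsion-free group of rank at most $r$, and $G$ is a direct sum of at most $r$ indecomposable subgroups. Next I would pass to quasi-isomorphism: by the essential uniqueness of the quasi-decomposition of a finite-rank torsion-free group into strongly indecomposable summands, every direct summand of $G$ is quasi-isomorphic to a sum drawn from one fixed finite list of indecomposables. Hence there are only finitely many quasi-isomorphism classes of direct summands of $G$.

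The main obstacle is that quasi-isomorphism is strictly coarser than isomorphism---the Krull--Schmidt theorem fails for finite-rank torsion-free groups---so one cannot finish by merely counting subsets of a fixed indecomposable decomposition. To control the genuine isomorphism classes inside a single quasi-isomorphism class I would use the endomorphism ring $E=\mathrm{End}(G)$: its additive group is torsion-free of rank at most $r^2$, and $E$ is an order in the finite-dimensional $\mathbb{Q}$-algebra $E\otimes\mathbb{Q}$. Direct summands of $G$ correspond to idempotents of $E$, and isomorphism of summands corresponds to equivalence of idempotents in $E$; after factoring out the Jacobson radical, which carries no extra idempotents, one lands in an order in a semisimple $\mathbb{Q}$-algebra. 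The finiteness of equivalence classes of such idempotents of bounded rank is then exactly the Jordan--Zassenhaus finiteness theorem for lattices over an order. This is the heart of the argument---indeed it is the theorem that a finite-rank torsion-free abelian group has only finitely many direct summands up to isomorphism---and combining it with the finiteness of quasi-isomorphism classes from the previous step yields the desired finiteness, which by Corollary \ref{20} completes the proof.
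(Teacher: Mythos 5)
Your proof is correct and follows essentially the same route as the paper: both reduce via Corollary \ref{20} to the statement that a finite rank torsion-free abelian group has only finitely many direct summands up to isomorphism. The paper simply cites this fact from the literature (the reference attributed to Dady, i.e.\ E.~L.~Lady's 1974 note), whereas you additionally sketch its proof via quasi-decompositions, idempotents in the endomorphism order, and Jordan--Zassenhaus --- which is in substance Lady's own argument.
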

\begin{proof}
It can be concluded from  Corollary \ref{20} and the fact that $G$  has only finitely many direct summands, up to
isomorphism (see \cite{5}).
\end{proof}
\begin{remark}
By the definition of AKS $\mathbb{Z}$-module (for more details see \cite{6}), an abelian group is  $AKS$  $\mathbb{Z}$-module if and only if it has finitely many direct summands. Hence, we can rewrite Corollary \ref{20} for any abelian group $G$ as follows:
\begin{center}
 `` $K(G,n)$ has finite capacity if and only if $G$ is an AKS   $\mathbb{Z}$-module''
\end{center}
As an example, Artinian $\mathbb{Z}$-modules satisfy in the definition of AKS $\mathbb{Z}$-module.
\end{remark}
To compute  the capacity of finite product of Eilenberg-MacLane spaces, we give  the following proposition.
\begin{proposition}\label{120120}
Let  $X=\prod_{\alpha \in I} K(G_\alpha ,n_{\alpha})$ where $n_{\alpha}$'s are distinct, $n_{\alpha}\geq 1$ and $G_{\alpha}$'s  are Hopfian groups. Then every topological space homotopy dominated by  $X$ is of the form $\prod_{\alpha \in I} K(H_{\alpha} ,n_{\alpha})$ where $H_{\alpha}$ is an r-image  of $G_{\alpha}$, for each $\alpha$.
\end{proposition}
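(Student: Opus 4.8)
The plan is to run the proof of Proposition~\ref{1000} again, with homotopy groups in place of homology, products in place of wedge sums, and the homotopy--group form of Whitehead's theorem in place of its homology form. First I would fix a domination, writing $g\colon X\lo Y$ for the domination map and $f\colon Y\lo X$ for its converse, so that $g\circ f\simeq\mathrm{id}_Y$ and hence $\pi_k(g)\circ\pi_k(f)=\mathrm{id}$ on $\pi_k(Y)$ for every $k$. Because the $n_\alpha$ are distinct and each $K(G_\alpha,n_\alpha)$ has a single nontrivial homotopy group, the product satisfies $\pi_{n_\alpha}(X)=G_\alpha$ and $\pi_k(X)=0$ whenever $k$ is none of the $n_\alpha$. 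The relation $\pi_k(g)\circ\pi_k(f)=\mathrm{id}$ shows that each $\pi_k(Y)$ is an r-image of $\pi_k(X)$; thus $\pi_k(Y)=0$ off the set $\{n_\alpha\}$, and setting $H_\alpha:=\pi_{n_\alpha}(f)\big(\pi_{n_\alpha}(Y)\big)$ produces an r-image $H_\alpha$ of $G_\alpha$ with $\pi_{n_\alpha}(Y)\cong H_\alpha$.

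Next I would assemble a comparison map. By Lemma~\ref{13} each $K(H_\alpha,n_\alpha)$ is homotopy dominated by $K(G_\alpha,n_\alpha)$; let $d_\alpha\colon K(G_\alpha,n_\alpha)\lo K(H_\alpha,n_\alpha)$ and $u_\alpha\colon K(H_\alpha,n_\alpha)\lo K(G_\alpha,n_\alpha)$ be the domination and converse maps furnished by its proof, realizing on $\pi_{n_\alpha}$ a retraction $G_\alpha\lo H_\alpha$ and a section $s_\alpha\colon H_\alpha\lo G_\alpha$. Since the product is the categorical product in $hTop^*$, the maps $d=\prod_{\alpha\in I} d_\alpha\colon X\lo Z$ and $u=\prod_{\alpha\in I} u_\alpha\colon Z\lo X$, where $Z:=\prod_{\alpha\in I}K(H_\alpha,n_\alpha)$, exhibit $Z$ as homotopy dominated by $X$. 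I would then study the single map $g\circ u\colon Z\lo Y$.

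Finally I would show $g\circ u$ is a weak equivalence and invoke Theorem~\ref{3030}. Both $Z$ and $Y$ have homotopy concentrated in the degrees $n_\alpha$, with the isomorphic groups $H_\alpha\cong\pi_{n_\alpha}(Y)$ there, so $g\circ u$ induces isomorphisms in all degrees outside $\{n_\alpha\}$ trivially. On $\pi_{n_\alpha}$ the induced map equals $\pi_{n_\alpha}(g)\circ s_\alpha$; since $H_\alpha=\pi_{n_\alpha}(f)(\pi_{n_\alpha}(Y))$ and $\pi_{n_\alpha}(g)\circ\pi_{n_\alpha}(f)=\mathrm{id}$, this composite is onto $\pi_{n_\alpha}(Y)$. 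As $G_\alpha$ is Hopfian and $H_\alpha$ is an r-image of it, $H_\alpha$ is again Hopfian, so the epimorphism $(g\circ u)_*\colon H_\alpha\lo\pi_{n_\alpha}(Y)$ between isomorphic Hopfian groups is an isomorphism. Hence $g\circ u$ induces isomorphisms on all homotopy groups and, by Theorem~\ref{3030}, is a homotopy equivalence, giving $Y\simeq\prod_{\alpha\in I}K(H_\alpha,n_\alpha)$ with $H_\alpha$ an r-image of $G_\alpha$.

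The step I expect to be the main obstacle is precisely this epimorphism-to-isomorphism passage, and two points there deserve care. First, unlike the Moore-space setting of Proposition~\ref{1000}, the space $Z$ need not be simply connected when some $n_\alpha=1$, so I must use the homotopy form of Whitehead's theorem (Theorem~\ref{3030}) rather than its homology form (Theorem~\ref{3031}); this is why the whole argument is carried out on homotopy groups. Second, the reduction ``an epimorphism between isomorphic Hopfian groups is an isomorphism'' rests on the claim that an r-image of a Hopfian group is again Hopfian, and for the possibly nonabelian group $G_\alpha$ occurring when $n_\alpha=1$ this goes beyond the abelian direct-summand fact recalled before Proposition~\ref{1000} and must be justified separately. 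Alternatively, one can avoid Hopficity entirely by arranging, through Lemma~\ref{13}, that $s_\alpha$ be the inclusion of the actual subgroup $\pi_{n_\alpha}(f)(\pi_{n_\alpha}(Y))\le G_\alpha$, in which case $\pi_{n_\alpha}(g)\circ s_\alpha$ is outright an isomorphism.
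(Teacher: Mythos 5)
Your proposal follows the paper's proof essentially verbatim: the same definition $H_\alpha=\pi_{n_\alpha}(f)(\pi_{n_\alpha}(Y))$, the same assembly of $d=\prod d_\alpha$ and $u=\prod u_\alpha$ via Lemma \ref{13}, the same comparison map $g\circ u$, and the same ``epimorphism between isomorphic Hopfian groups is an isomorphism'' step followed by Theorem \ref{3030}. Your closing remark correctly identifies a point the paper leaves implicit (Hopficity of r-images in the possibly nonabelian case $n_\alpha=1$, where the paper only recalls the abelian direct-summand fact), but this is added care rather than a different argument.
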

\begin{proof}
Suppose that the space $Y$ is  homotopy dominated by $X$ with a domination map $g:X\longrightarrow Y$ and a converse map $f:Y\longrightarrow X$. If we put $\pi_{n_\alpha}(f)(\pi_{n_{\alpha}} (Y))=H_{\alpha}$, then $ H_{\alpha}$ is an r-image of $\pi_{n_\alpha } (X)$. Also, since $n_\beta$'s are distinct, we have $\pi_{n_\alpha } (X)=\pi_{n_\alpha } (\prod_{\beta \in I} K(G_\beta ,n_{\beta}))=\prod_{\beta \in I} \pi_{n_\alpha}(K(G_\beta ,n_{\beta}))=G_{\alpha}$. By Lemma \ref{13}, $K(H_{\alpha}  ,n_{\alpha})$ is homotopy dominated by $K(G_{\alpha} ,n_{\alpha})$. Suppose that for any $\alpha$,  $d_{\alpha}:K(G_{\alpha} ,n_{\alpha}) \longrightarrow K(H_{\alpha}  ,n_{\alpha}) $ and
$u_{\alpha}:K(H_{\alpha}  ,n_{\alpha}) \longrightarrow K(G_{\alpha} ,n_{\alpha})$ is a domination and a converse map, respectively. Then $\prod K(H_{\alpha} ,n_{\alpha})$ is homotopy dominated by $X$ with  a domination map $d=\prod_{\alpha \in I}d_{\alpha}:X \longrightarrow \prod K(H_{\alpha} ,n_{\alpha})$ and a converse map $u=\prod_{\alpha \in I}u_{\alpha}:\prod K(H_{\alpha} ,n_{\alpha})\longrightarrow X$.  Now, the map
\[
g\circ u:\prod K(H_{\alpha}  ,n_{\alpha}) \longrightarrow Y
\]
 is a homotopy equivalence between connected CW-complexes since \\ $(g\circ u)_*  (\pi_{n_\alpha}(\prod K(H_{\alpha}  ,n_{\alpha}))=(g\circ u)_* (H_{\alpha})=g_* (u_* (f_* (\pi_{n_{\alpha}} (Y))))=g_* (f_* (\pi_{n_{\alpha}} (Y)))=\pi_{n_{\alpha}} (Y)$. Now, $(g\circ u)_*$ is an epimorphism between two isomorphic  Hopfian groups  $H_{\alpha}$ and $\pi_{n_{\alpha}} (Y)$ which implies that $(g\circ u)_*$ is isomorphism and so, by Theorem \ref{3030}, $Y$ and  $\prod_{\alpha \in I} K(f_* (H_{\alpha}) ,n_{\alpha})$ have the same homotopy type.
\end{proof}
\begin{corollary}\label{Q1}
Let $\{ K(G_n ,n)\}_{n \in I}$ be a family of Eilenberg-MacLane spaces where  $I$ is a subset of $\mathbb{N}$ and $G_{\alpha}$'s  are Hopfian groups.   Then
$C(\prod_{n \in I} K(G_n ,n) )=\prod_{n\in I} C\big( K(G_n,n)\big) $.
\end{corollary}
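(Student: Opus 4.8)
The plan is to deduce this directly from Proposition \ref{120120} together with the count established in Proposition \ref{2000}. Since the family is indexed by the degrees $n \in I \subseteq \mathbb{N}$ themselves, the degrees are automatically pairwise distinct, so the hypotheses of Proposition \ref{120120} are met with $n_\alpha = \alpha$. Writing $X = \prod_{n\in I} K(G_n, n)$, that proposition tells me that every space homotopy dominated by $X$ has the form $\prod_{n\in I} K(H_n, n)$ with each $H_n$ an r-image of $G_n$; conversely, the explicit domination and converse maps $d = \prod_{n\in I} d_n$ and $u = \prod_{n\in I} u_n$ built in its proof show that every such product is in fact homotopy dominated by $X$.

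First I would set up a bijection between the homotopy types of spaces dominated by $X$ and the set $\prod_{n\in I} R(G_n)$, where $R(G_n)$ denotes the set of isomorphism classes of r-images of $G_n$. The map sends the homotopy type of $\prod_{n\in I} K(H_n, n)$ to the tuple $([H_n])_{n\in I}$. The key observation making this well defined and injective is that the degrees are distinct: for each $m \in I$ only the factor $K(H_m, m)$ contributes in dimension $m$, so $\pi_m\big(\prod_{n\in I} K(H_n, n)\big) \cong H_m$, while $\pi_k = 0$ for $k \notin I$. Since homotopy groups are homotopy invariants, the homotopy type of the product recovers each $H_m$ up to isomorphism, so distinct tuples correspond to distinct homotopy types. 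Surjectivity is precisely the converse direction noted above, and the fact that the homotopy type of each $\prod_{n\in I} K(H_n, n)$ is well defined by the data $(H_n)_{n\in I}$ follows from Theorem \ref{10}.

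With the bijection in hand the computation is immediate. By Proposition \ref{2000}, the capacity $C(K(G_n, n))$ equals $|R(G_n)|$, the number of isomorphism classes of r-images of $G_n$. Therefore
\[
C(X) = \Big| \prod_{n\in I} R(G_n) \Big| = \prod_{n\in I} |R(G_n)| = \prod_{n\in I} C\big(K(G_n, n)\big),
\]
where the middle equality is just the cardinality of a product of sets, valid whether $I$ is finite or infinite.

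The main obstacle I expect is the injectivity step, that is, confirming that two products $\prod_{n\in I} K(H_n, n)$ and $\prod_{n\in I} K(H'_n, n)$ sharing a homotopy type must satisfy $H_n \cong H'_n$ for every $n$. This is where the distinctness of the degrees is essential, since it is exactly what lets the homotopy groups of the product separate the factors; without it one would be forced to understand how the r-images of a single group combine inside a common degree, a situation not controlled by the present results (cf.\ the remark following Proposition \ref{1000}). The remaining points, surjectivity and the final cardinal count, are routine once Propositions \ref{120120} and \ref{2000} are invoked.
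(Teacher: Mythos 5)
Your proposal is correct and follows the same route as the paper, whose proof of Corollary \ref{Q1} is simply the one-line statement that it is a direct consequence of Proposition \ref{120120}; you have filled in exactly the routine details (the bijection with $\prod_{n\in I} R(G_n)$ via the homotopy groups, surjectivity from the product domination maps, and the cardinality count via Proposition \ref{2000}) that the paper leaves implicit. No gaps.
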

\begin{proof}
This is a direct result of Proposition \ref{120120}.
\end{proof}
\begin{remark}
Note that we can not omit the distinctness condition of $n_{\alpha}$'s in Proposition \ref{120120} . If $X=K(G_1 , n)\times K(G_2 ,n)$ for a fixed natural number $n$, then we need the structure of r-images of $G_1\times G_2$ which is unknown in general.
\end{remark}
Let $X$ be a topological space. The set of all maps $f:X\longrightarrow X$ satisfying the condition $f^2 =f$, constitute a subset of $X^X$ which is denoted by $\mathcal{R}(X)$ (see \cite{Concer}).
Also, the set of all homotopy classes of maps $f:X\longrightarrow X$ with $f^2 \simeq f$ which are called   homotopy idempotents of X, is denoted by $hI(X)$. Similarly, for a group $G$, the set of all homomorphisms $f:G\longrightarrow G$ with $f^2 =f$, is denoted by $\mathcal{R}(G)$.
\begin{remark}\label{899}
There is an upper bound for the capacity of any topological space $X$ as $C(X)\leq |hI(X)|$ (see \cite{12}). Here we show that $|hI(X)|$ is not a good upper bound for the capacity of $X$. For this, let $X$ be an Eilenberg-MacLane space $K(G,1)$. By Theorem \ref{asl}, the correspondence $f\longmapsto f_* $ induces a one-to-one correspondence between $[X,X]$ and $Hom(\pi_1 (X),\pi_1 (X))$. So the number of homotopy classes of maps $f:X\longrightarrow X$ with $f^2 \simeq f$ equals to the number of homomorphisms $g:\pi_{1}(X)\longrightarrow \pi_{1}(X)$ with $g^2 =g$. Hence $|hI(X)|=|\mathcal{R}(\pi_{1}(X))|$. Now,  suppose $X$ is the torus $\mathbb{T}^2$. Then $|hI(\mathbb{T}^2)|=|\mathcal{R}(\pi_{1}(\mathbb{T}^2))|$.   Since  $\pi_{1}(\mathbb{T}^2) \cong \mathbb{Z}\times \mathbb{Z}$, we have $Hom(\pi_1 (\mathbb{T}^2 ),\pi_1 (\mathbb{T}^2 ) )\cong M_{2}(\mathbb{Z})$. Therefore $|\mathcal{R}(\pi_1 (\mathbb{T}^2))|$  equals to the number of idempotent matrices in $M_{2}(\mathbb{Z})$. But $M_{2}(\mathbb{Z})$ has infinite number idempotents like $
\begin{pmatrix}
1 & n \\
0 & 0 \\
\end{pmatrix}
$
 for $n\in \mathbb{Z}$. Hence $hI(\mathbb{T}^2 )$ is infinite, while   $C(\mathbb{T}^2 )=3$.
\end{remark}

%--------------------------------------------------------------------------------------------------------------- {\%===================================================================================================================%

%=========================================================================================================================================
%=========================================================================================================================================

%% The Appendices part is started with the command \appendix;
%% appendix sections are then done as normal sections
%% \appendix

%% \section{}
%% \label{}

%% References
%%
%% Following citation commands can be used in the body text:
%% Usage of \cite is as follows:
%%   \cite{key}         ==>>  [#]
%%   \cite[chap. 2]{key} ==>> [#, chap. 2]
%%

%% References with bibTeX database:

%\bibliographystyle{elsarticle-num}
%\bibliography{<your-bib-database>}

%% Authors are advised to submit their bibtex database files. They are
%% requested to list a bibtex style file in the manuscript if they do
%% not want to use elsarticle-num.bst.

%% References without bibTeX database:
%\section*{References}
% \begin{thebibliography}{00}

%% \bibitem must have the following form:
%%   \bibitem{key}...
%%

% \bibitem{}

% \end{thebibliography}

\end{document}